\documentclass[12pt]{article}

\usepackage[top=0.8in,bottom=1in,left=1in,right=1in]{geometry}
\usepackage{amsmath,amsthm,amssymb}
\usepackage{enumerate,xcolor,graphicx}
\usepackage{float}
\usepackage{authblk}

\usepackage{tikz}

\usepackage{hyperref}
\hypersetup{hidelinks}

\usetikzlibrary{arrows.meta,calc,positioning}

\newcommand{\Av}{\operatorname{Av}}

\DeclareMathOperator{\Var}{Var}
\DeclareMathOperator{\Bin}{Bin}
\newcommand{\dTV}{d_{\mathrm{TV}}}
\newcommand{\Law}{\mathcal{L}}

\newtheorem{thm}{Theorem}
\newtheorem{lem}[thm]{Lemma}
\newtheorem{prop}[thm]{Proposition}
\newtheorem{cor}[thm]{Corollary}
\newtheorem{openproblem}{Open Problem}

\theoremstyle{definition}
\newtheorem{defi}{Definition}
\newtheorem{rem}{Remark}
\newtheorem*{example}{Example}

\newcommand{\Z}{\mathbb{Z}}

\begin{document}

\title{The Distribution of Double Deficiencies in Pattern-Avoiding Permutations}

\author[1,4]{Tipaluck Krityakierne}
\author[2]{Thotsaporn Aek Thanatipanonda\thanks{Corresponding author. Email: \texttt{thotsaporn@gmail.com}}}
\author[3]{Doron Zeilberger}

\affil[1]{Department of Mathematics, Faculty of Science, Mahidol University,
272 Rama VI Rd., Ratchathewi, Bangkok 10400, Thailand}

\affil[2]{Science Division, Mahidol University International College,
999 Phutthamonthon 4 Rd., Salaya, Phutthamonthon,
Nakhon Pathom 73170, Thailand}

\affil[3]{Department of Mathematics, Rutgers University,
Hill Center--Busch Campus, 110 Frelinghuysen Rd.,
Piscataway, NJ 08854, USA}

\affil[4]{Centre of Excellence in Mathematics, MHESI,
272 Rama VI Rd., Ratchathewi, Bangkok 10400, Thailand}

\date{}

\maketitle

\vspace{-3em}

\begin{abstract}
We study the distribution of the number of double deficiencies (DD) in
permutations of length $n$ avoiding one or two patterns of length 3. 
Using structural decompositions of these avoidance classes---together with a lattice-path
decomposition in the \(321\)-avoiding case---we derive functional equations
and convolution-type recurrences that efficiently compute the corresponding
double-deficiency generating functions in all but one single-pattern case.
In the 321-avoiding permutations, the resulting generating function is algebraic;
we derive exact formulas for the mean and variance and prove that the
distribution is close in total variation to $\Bin(n-2,1/4)$, with an
explicit convergence rate.
We also identify a DD-preserving symmetry that yields DD-Wilf equivalences,
reducing the number of two-pattern cases that need to be considered separately.
For the resulting two-pattern classes, we obtain explicit recurrences, including
C-finite relations.
\end{abstract}

\section{Introduction}
\label{sec:intro}

Pattern avoidance is a classical topic in enumerative combinatorics.
Let
\[
[n]=\{1,2,\ldots,n\},
\]
and let $\mathfrak S_n$ denote the set of all permutations of $[n]$.
A permutation $\pi\in\mathfrak S_n$ is said to \emph{contain} a pattern
$\sigma\in\mathfrak S_k$ if some subsequence of $\pi$ is order-isomorphic
to $\sigma$; otherwise, $\pi$ is said to \emph{avoid} $\sigma$.
For a set of patterns $S$, let $\Av_n(S)$ denote the set of permutations
in $\mathfrak S_n$ avoiding every pattern in $S$.  When
$S=\{\sigma\}$, we write simply $\Av_n(\sigma)$.

For patterns of length $3$, the ordinary enumeration of these avoidance
classes is well understood. Simion and Schmidt systematically studied
permutations avoiding subsets of $\mathfrak S_3$, determining the
cardinalities of the resulting classes and giving several bijective
results \cite{SimionSchmidt1985}. In particular, for every
$\sigma\in\mathfrak S_3$, the single-pattern avoidance class
$\Av_n(\sigma)$ is counted by the Catalan number
\[
C_n=\frac{1}{n+1}\binom{2n}{n}.
\]
The simultaneous avoidance of two length-$3$ patterns has also been
studied extensively. For example, Vatter developed generating-tree
methods for enumerating permutations avoiding two distinct patterns
from $\mathfrak S_3$, possibly together with additional forbidden
patterns \cite{Vatter2002}.

A finer question is how permutation statistics are distributed within
these classes. Ordinary enumeration alone does not capture such
information: two avoidance classes may have the same cardinality but
different distributions of a given statistic. Several refinements of
length-$3$ pattern avoidance have therefore been studied. 
Robertson, Saracino, and Zeilberger refined the enumeration by the number of
fixed points---indices $k$ with $\pi(k)=k$---showing that the $132$-, $213$-,
and $321$-avoiding classes are equidistributed with respect to this statistic,
as are the $231$- and $312$-avoiding ones~\cite{RSZ}.
Elizalde determined the joint distribution of fixed points and excedances 
(indices with $\pi(k)>k$) for permutations
simultaneously avoiding subsets of $\mathfrak S_3$~\cite{Elizalde2004},
while Elizalde and Pak gave a bijection between $321$- and
$132$-avoiding permutations that preserves both
statistics~\cite{ElizaldePak2004}. Dokos, Dwyer, Johnson, Sagan, and
Selsor studied this phenomenon more generally through
\emph{statistic-Wilf equivalence}, and in particular determined the
equivalences arising from the inversion number and major index for sets
of forbidden patterns contained in $\mathfrak S_3$~\cite{DokosEtAl2012}.

In this paper, the refining statistic is the number of
\emph{double deficiencies}. For $\pi\in\mathfrak S_n$, an index
$k\in[n]$ is a double deficiency of $\pi$ if
\[
\pi(k)<k<\pi^{-1}(k),
\]
and we denote the number of double deficiencies of $\pi$ by
$\operatorname{DD}(\pi)$.
For example, consider the permutation $\pi=4251736$. Since
$\pi(6)=3<6<7=\pi^{-1}(6)$, the index $k=6$ is a double deficiency.
In fact, this is the only double deficiency of $\pi$, so
$\operatorname{DD}(\pi)=1$.

\begin{defi}
For a set of patterns $S$, define the double-deficiency generating
function by
\[
f_S(n,x)
=
\sum_{\pi\in\Av_n(S)}
x^{\operatorname{DD}(\pi)}.
\]
\end{defi}
For each fixed $n\ge2$, this is a polynomial in $x$ of degree at most $n-2$, since
neither $k=1$ nor $k=n$ can be a double deficiency, as $\pi(1)<1$ and
$\pi^{-1}(n)>n$ are impossible, so at most $n-2$ of the $n$ indices qualify.

For unrestricted permutations (i.e., when $S=\varnothing$),
inversion transforms double deficiencies into double excedances.
Under a version of Foata's fundamental transformation, double excedances
correspond to occurrences of the consecutive pattern $123$, which, by
reversal or complement symmetry, are equidistributed with occurrences
of the consecutive pattern $321$, or equivalently, with proper double
descents \cite{ElizaldeCF}.
The study of double descents and related
local permutation statistics goes back to Carlitz and Scoville
\cite{CarlitzScoville}. Elizalde and Noy obtained the full generating
function for the distribution of the number of occurrences of the
consecutive pattern $123$, and hence, by symmetry, $321$
\cite[Theorem~4.1]{ElizaldeNoy}. Consequently, their result also gives the
unrestricted double-deficiency distribution:
\[
\sum_{n\geq 0} f_{\varnothing}(n,x)\frac{z^n}{n!}
=
\frac{
2r\exp\!\left(\frac{1-x+r}{2}z\right)
}{
1+x+r-(1+x-r)\exp(rz)
},
\qquad
r=\sqrt{(x-1)(x+3)}.
\]
Fu later gave a context-free-grammar treatment and further refinements
involving proper double descents \cite{Fu}.

For pattern-avoiding permutations, however, the distribution of double
deficiencies has received much less attention than other classical
permutation statistics such as fixed points, excedances, inversions,
and major index.
The main previous work on double deficiencies in pattern-avoiding
permutations concerns $321$-avoiding permutations.
Motivated by a
conjecture of Marczinzik from the representation theory of Nakayama
algebras, Rubey and Stump studied double deficiencies using the
Billey--Jockusch--Stanley bijection between Dyck paths and
$321$-avoiding permutations \cite{BJS,RS}. In particular, they showed
that $321$-avoiding permutations with no double deficiencies are counted
by the Motzkin numbers. More recently, Krityakierne, Thanatipanonda, and
Zeilberger gave a shorter direct proof of the same enumeration
\cite{MAZ}.

Despite the existing results for unrestricted permutations and for the
zero-double-deficiency case in $\Av(321)$, to the best of our knowledge there has
been no systematic study of the full double-deficiency distribution for avoidance
classes defined by one or two patterns of length 3. In Section~\ref{sec:1pattern},
we focus on single-pattern avoidance. Using structural decompositions of these
avoidance classes, together with a lattice-path decomposition in the 321-avoiding
case, we derive functional equations and convolution-type recurrences for
efficiently computing the associated generating functions in all but one
single-pattern case.
We also identify, in Section~\ref{sec:DD-Wilf}, a symmetry that preserves
the number of double deficiencies. A composition of the usual reverse,
complement, and inverse operations interchanges the patterns $132$ and
$213$ while fixing the other four patterns of length $3$. In the
framework of statistic-Wilf equivalence, this yields a family of
\emph{DD-Wilf equivalences} and reduces the number of two-pattern
avoidance classes that need to be investigated separately in
Section~\ref{sec:2pattern}.
All Maple programs accompanying this work are available at
\url{https://sites.math.rutgers.edu/~zeilberg/mamarim/mamarimhtml/DD3av.html}.


\section{Single-Pattern Avoidance}
\label{sec:1pattern}

We now study the double-deficiency generating functions for
single-pattern avoidance classes of length $3$. For convenience, write
\[
f_{\sigma}(n,x)=f_{\{\sigma\}}(n,x),
\qquad \sigma\in S_3.
\]

\subsection{The class $\Av(321)$} 

The class $\Av(321)$ does not admit the block decomposition used later for $\Av(132)$, $\Av(213)$, and $\Av(231)$. We therefore use a lattice-path bijection under which double deficiencies correspond to a single step type. Previous work on double deficiencies in $321$-avoiding permutations uses the Billey--Jockusch--Stanley bijection with Dyck paths \cite{BJS,RS}. For the full distribution, we use Elizalde's cycle-diagram formulation \cite{ElizaldeCF} of the Foata--Zeilberger correspondence \cite{FoataZeilberger1990}, which gives a bijection with bicolored Motzkin paths.

Recall that a Motzkin path of length \(n\) is a lattice path from
\((0,0)\) to \((n,0)\), with up-steps \(U=(1,1)\), level steps
\(L=(1,0)\), and down-steps \(D=(1,-1)\), that never passes below the
\(x\)-axis. The number of such paths is the \(n\)th Motzkin number
\(M_n\).

In Elizalde's correspondence \cite[Sections~3.1 and~4.2]{ElizaldeCF}, a \(321\)-avoiding permutation \(\pi\) is
mapped to a bicolored Motzkin path in which level steps above the
\(x\)-axis have two possible colors, while level steps on the
\(x\)-axis have only one. The level steps on the \(x\)-axis correspond
to fixed points of \(\pi\), while the two types of level step above the
\(x\)-axis correspond, respectively, to double excedances and double
deficiencies. We adopt the convention of leaving fixed-point and
double-excedance level steps uncolored and coloring double-deficiency
level steps red. This gives a bijection between \(\Av_n(321)\) and the
set of all such paths of length \(n\), under which the number of red
level steps equals \(\operatorname{DD}(\pi)\).

Let \(F_1(z,x)\) count these paths, with \(z\) marking length and \(x\)
marking red level steps. Let \(F_0(z,x)\) be the auxiliary series in
which level steps on the \(x\)-axis may also be red. Thus
\[
F_1(z,x)=\sum_{n\ge0}f_{321}(n,x)z^n.
\]

\begin{figure}[ht]
\centering

\begin{tikzpicture}[
    scale=0.9,
    line cap=round,
    line join=round,
    every node/.style={font=\small}
]


\node at (0.8,4.6) {Path (drawn for $F_1$)};
\node at (5.8,4.6) {Contribution to $F_1$ at $x=1$};
\node at (11.4,4.6) {Contribution to $F_0$  at $x=1$};


\begin{scope}[shift={(0,3.5)}]
\draw[very thick] (0,0)--(0.8,0.8)--(1.6,0)--(2.4,0);
\end{scope}

\node at (6.3,3.5) {$1$};
\node at (11.5,3.5) {$2$};


\begin{scope}[shift={(0,2.3)}]
\draw[very thick] (0,0)--(0.8,0);
\draw[very thick] (0.8,0)--(1.6,0.8)--(2.4,0);
\end{scope}

\node at (6.3,2.3) {$1$};
\node at (11.5,2.3) {$2$};


\begin{scope}[shift={(0,1.1)}]
\draw[very thick] (0,0)--(0.8,0.8);
\draw[very thick] (0.8,0.8)--(1.6,0.8);
\draw[red,very thick] (0.8,0.92)--(1.6,0.92);
\draw[very thick] (1.6,0.8)--(2.4,0);
\end{scope}

\node at (6.3,1.1) {$2$};
\node at (11.5,1.1) {$2$};


\begin{scope}[shift={(0,-0.1)}]
\draw[very thick] (0,0)--(2.4,0);
\fill (0,0) circle (1.5pt);
\fill (0.8,0) circle (1.5pt);
\fill (1.6,0) circle (1.5pt);
\fill (2.4,0) circle (1.5pt);
\end{scope}

\node at (6.3,-0.1) {$1$};
\node at (11.5,-0.1) {$2^3=8$};




\end{tikzpicture}
\caption{The four Motzkin path shapes of length \(3\), drawn for \(F_1\):
level steps on the \(x\)-axis must be uncolored, while those above it may
be either uncolored or red, as the parallel segments indicate. In \(F_0\),
every level step may be either.}

\label{fig:motzkin3}
\end{figure}

At \(x=1\), Figure~\ref{fig:motzkin3} gives
\[
[z^3]F_1(z,1)=5=C_3,
\qquad
[z^3]F_0(z,1)=14=C_4.
\]
More generally,
\begin{equation}\label{eq:catalan-claim}
[z^n]F_1(z,1)=C_n,
\qquad
[z^n]F_0(z,1)=C_{n+1}.
\end{equation}
The first identity is the Catalan enumeration of \(\Av_n(321)\); both
identities are verified in Remark~\ref{rem:catalan}. A first-return
decomposition of an \(F_0\)-path gives
\begin{equation}\label{eq:F0-func}
F_0(z,x)
=
1+(1+x)zF_0(z,x)+z^2F_0(z,x)^2.
\end{equation}

\begin{figure}[ht]
\begin{center}
\begin{tikzpicture}[
    scale=0.9,
    line cap=round,
    line join=round,
    every node/.style={font=\small}
]


\node[anchor=east] at (0,1.4) {Level step};

\draw[very thick] (0.5,1.4)--(1.5,1.4);

\node at (2.2,1.4) {$+$};

\node[draw,rounded corners] at (3.4,1.4) {$F_1$};

\node at (5.8,1.4)
{$\Longrightarrow\quad zF_1$};


\node[anchor=east] at (0,-0.2) {First return};

\draw[very thick] (0.5,-0.2)--(1.2,0.5);
\draw[dashed] (1.2,0.5)--(2.7,0.5);
\draw[very thick] (2.7,0.5)--(3.4,-0.2);

\node at (1.95,0.8) {$F_0$};

\node at (4.0,-0.2) {$+$};

\node[draw,rounded corners] at (5.2,-0.2) {$F_1$};

\node at (8.0,-0.2)
{$\Longrightarrow\quad z^2F_0F_1$};

\end{tikzpicture}
\caption{First-return decomposition for \(F_1(z,x)\). The analogous
decomposition for \(F_0(z,x)\) is obtained by replacing the boxes
labeled \(F_1\) by \(F_0\) and allowing the initial level step to be
either uncolored or red. The two nonempty contributions then become
\((1+x)zF_0\) and \(z^2F_0^2\).}
\label{fig:F1-decomposition}
\end{center}
\end{figure}

For \(F_1\), the initial level step has weight \(z\), while the portion
enclosed by the first return is an \(F_0\)-path (Figure~\ref{fig:F1-decomposition}). Hence,
\begin{equation}\label{eq:F1-func}
F_1(z,x)
=
1+zF_1(z,x)+z^2F_0(z,x)F_1(z,x).
\end{equation}

Solving the quadratic equation for \(F_0(z,x)\) gives
\[
F_0(z,x)
=
\frac{1-(1+x)z-\sqrt{1-2z-2xz-3z^2+2xz^2+x^2z^2}}
{2z^2}.
\]

Substituting into the equation for \(F_1(z,x)\) and rearranging yields

\[
F_1(z,x)
=
\frac{2}
{1-z+xz+\sqrt{1-2z-2xz-3z^2+2xz^2+x^2z^2}}.
\]

This is the desired bivariate generating function over \(\Av(321)\).

\begin{rem}\label{rem:catalan}
At \(x=1\), the discriminant is \(1-4z\), and
\[
F_0(z,1)
=
\frac{1-2z-\sqrt{1-4z}}{2z^2},
\qquad
F_1(z,1)
=
\frac{2}{1+\sqrt{1-4z}}
=
\frac{1-\sqrt{1-4z}}{2z}
=
C(z),
\]
where \(C(z)=\sum_{n\ge0}C_nz^n\). Since \(C=1+zC^{2}\),
\[
F_0(z,1)
=
\frac{1-2z-\sqrt{1-4z}}{2z^{2}}
=
\frac{C(z)-1}{z}
=
C(z)^{2}.
\]
This proves~\eqref{eq:catalan-claim}.
\end{rem}


\paragraph{Coefficient recurrences.}
Equating coefficients in~\eqref{eq:F0-func}--\eqref{eq:F1-func}, write
\[
F_0(z,x)
=
\sum_{n\ge0}g(n,x)z^n,
\qquad
F_1(z,x)
=
\sum_{n\ge0}f_{321}(n,x)z^n.
\]
Then \(g(0,x)=f_{321}(0,x)=1\), and for \(n\ge1\),
\[
g(n,x)
=
(1+x)g(n-1,x)
+
\sum_{i=2}^{n}
g(i-2,x)\,g(n-i,x).
\]

\[
f_{321}(n,x)
=
f_{321}(n-1,x)
+
\sum_{i=2}^{n}
g(i-2,x)\,
f_{321}(n-i,x).
\]

\begin{figure}[ht]
\centering
\includegraphics[width=0.75\textwidth]{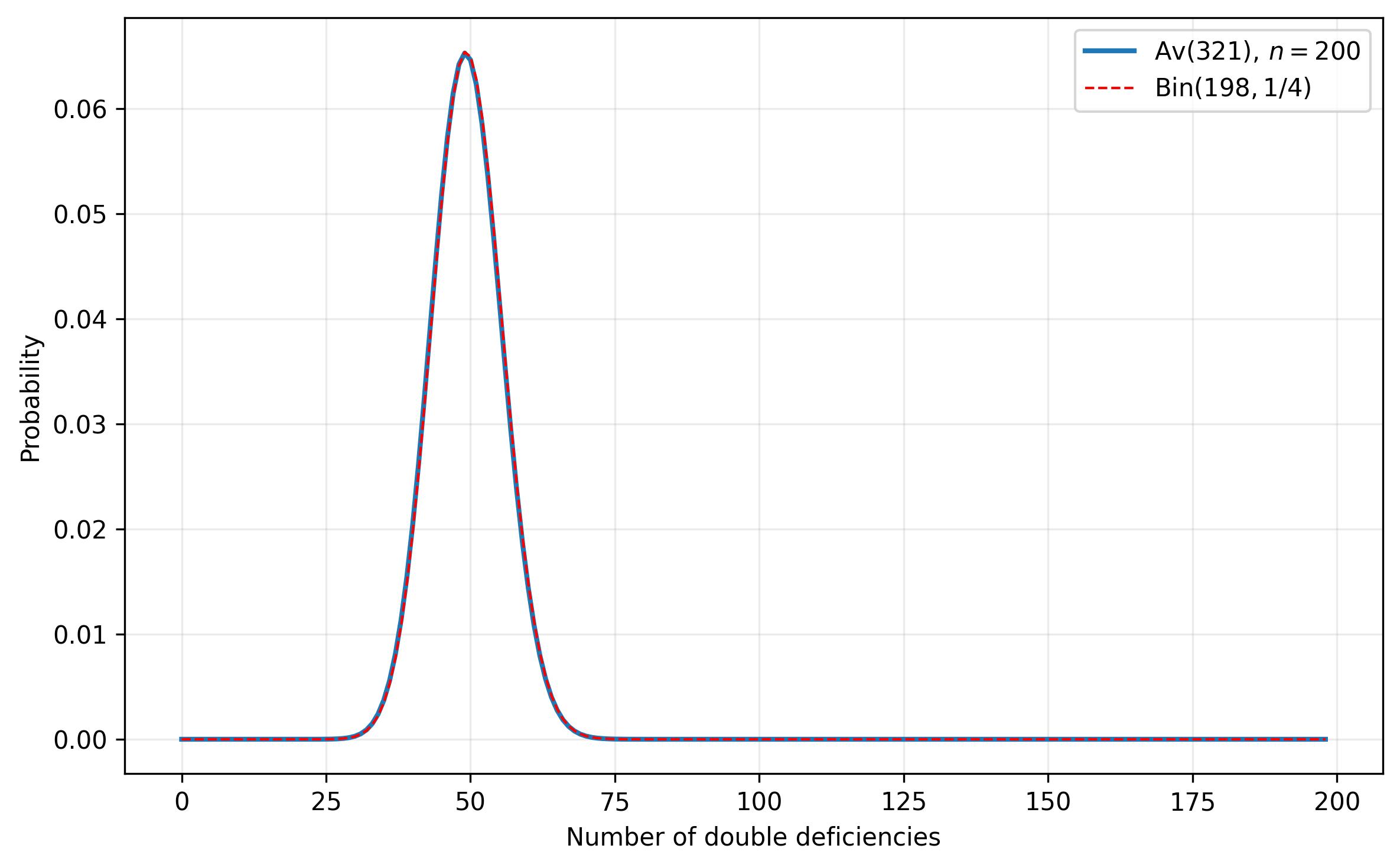}
\caption{Probability distribution of the number of double deficiencies
in \(\Av_{200}(321)\), together with the binomial distribution
\(\operatorname{Bin}(198,\tfrac14)\).}
\label{fig:321distribution}
\end{figure}

Figure~\ref{fig:321distribution}, computed from this recurrence, suggests
the binomial approximation proved next.

\paragraph{A binomial approximation.}

For probability distributions \(\mu\) and \(\nu\) on \(\mathbb Z\), write
\[
d_{\mathrm{TV}}(\mu,\nu)
=
\frac12\sum_{j\in\mathbb Z}
\left|\mu\{j\}-\nu\{j\}\right|.
\]

\begin{prop}\label{prop:binomial}
Let \(X_n\) denote the number of double deficiencies in a uniformly
random permutation in \(\Av_n(321)\). For \(n\ge2\), the support of
\(X_n\) is \(\{0,1,\ldots,n-2\}\), and
\[
\mathbb E[X_n]
=
\frac{(n-1)(n-2)}{2(2n-1)}
\]
and
\[
\operatorname{Var}(X_n)
=
\frac{(n+1)(n-2)(3n^2-8n+3)}
     {2(2n-1)^2(2n-3)}.
\]
Moreover,
\[
d_{\mathrm{TV}}
\left(
\mathcal L(X_n),
\operatorname{Bin}\left(n-2,\frac14\right)
\right)
=
O\left(n^{-1/2}(\log n)^{1/4}\right).
\]
In particular, $X_n$ is asymptotically normal.
\end{prop}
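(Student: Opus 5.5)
The plan is to work directly with the bicolored Motzkin path model and the explicit generating function $F_1(z,x)$.

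\emph{Support.} For the support, the endpoints $0$ and $n-2$ are realised by explicit permutations. The identity has $\operatorname{DD}=0$. For $\operatorname{DD}=n-2$, take the path $U L^{n-2} D$ with every level step red. Intermediate values come from recoloring red steps of this path one at a time. The upper bound $n-2$ is the remark after the definition.

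\emph{Mean and variance.} The moments come from differentiating $F_1$ at $x=1$. Write $F_1=2/(1-z+xz+\sqrt{\Delta})$ with $\Delta=(1-(1+x)z)^2-4z^2$. At $x=1$ we have $\sqrt{\Delta}=\sqrt{1-4z}=:s$. Then $\partial_x F_1|_{x=1}$ and $\partial_x^2F_1|_{x=1}$ are rational functions of $z$ and $s$. Extracting $[z^n]$ uses the standard expansions of $s^{-k}$ and $s^{k}$, that is, central binomial coefficients. Dividing by $C_n$ gives $\mathbb E[X_n]$ and $\mathbb E[X_n(X_n-1)]$, and hence the variance. This is routine, and I would check it against the recurrence for small $n$.

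\emph{Binomial approximation: reduction to a mixture.} I would exploit the structure of the path model. Condition on the uncolored Motzkin shape $P$, with the $x$-axis level steps uncolored. Then the number of red steps is $\Bin(m(P),1/2)$, where $m(P)$ is the number of level steps strictly above the axis. So $X_n$ is a mixture of binomials $\Bin(M_n,1/2)$, with $M_n$ distributed as $m(P)$ weighted by $2^{m(P)}$ among Motzkin shapes. Equivalently, $M_n$ is the number of non-axis level steps in a uniform Dyck-type object of size $C_n$.

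Heuristically, a typical path rarely touches the axis. Each of the roughly $n/2$ positions is then a level step with probability about $1/2$ (from the $2$ colors versus the up/down pairing), so $M_n\approx (n-2)/2$ with fluctuations of order $\sqrt n$. The target $\Bin(n-2,1/4)$ is itself $\Bin(B,1/2)$ with $B\sim\Bin(n-2,1/2)$. This suggests coupling: show that $\mathcal L(M_n)$ is close to $\Bin(n-2,1/2)$ in total variation, or at least in a form good enough after thinning. Thinning by $1/2$ does not increase total variation, so
\[
d_{\mathrm{TV}}\bigl(\mathcal L(X_n),\Bin(n-2,\tfrac14)\bigr)\le d_{\mathrm{TV}}\bigl(\mathcal L(M_n),\Bin(n-2,\tfrac12)\bigr).
\]

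\emph{Main obstacle.} The hard step is controlling $\mathcal L(M_n)$. I would get its bivariate generating function from $F_1(z,x)$ with $x$ replaced by a variable marking non-axis level steps. I expect an exact formula: $\Pr(M_n=m)$ should be a simple ratio of binomial-type coefficients, because $F_0$ at $x=1$ is $C^2$ and the distribution of level steps relates to $\binom{n-2}{m}$ times a Catalan-like factor. Comparing this with $\binom{n-2}{m}2^{-(n-2)}$ gives a likelihood ratio that is $1+O(|m-(n-2)/2|/n+\dots)$ near the centre. I would then split into a central window $|m-n/2|\le C\sqrt{n\log n}$ and its complement. The complement is handled by Chernoff/Hoeffding tails, using the binomial structure or the variance bound. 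In the window, the ratio error is of order $\sqrt{\log n}/\sqrt n$, and with care the effective rate comes out as $n^{-1/2}(\log n)^{1/4}$. If no clean exact formula appears, the fallback is a local limit theorem for $M_n$ obtained by singularity analysis of the bivariate generating function (a quasi-powers or Hwang-type argument). That argument also gives the asymptotic normality directly, and asymptotic normality in any case follows from the total variation bound together with the de Moivre--Laplace theorem for the binomial.
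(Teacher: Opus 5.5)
Your support argument is fine. Your plan for the moments is the same differentiation the paper performs (it works on the functional equations rather than the closed form), though you do not carry it out. The mixture representation $X_n\sim\Bin(M_n,\tfrac12)$ and the contraction inequality are also correct.

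The gap is that the right-hand side of that inequality does not tend to $0$: $\Law(M_n)$ is \emph{not} close to $\Bin(n-2,\tfrac12)$ in total variation. The heuristic that a typical path ``rarely touches the axis'' hides a parity effect. Every Motzkin path of length $n$ has a number of level steps congruent to $n$ modulo $2$. The level steps on the axis are the fixed points, so $M_n\equiv n-\mathrm{Fix}(\pi)\pmod 2$, and $\mathrm{Fix}(\pi)$ is $O(1)$ but not concentrated on one parity. Concretely, each shape $P$ contributes $(1+x)^{m(P)}$ to $p_n(x)=[z^n]F_1(z,x)$, so $\mathbb E[y^{M_n}]=p_n(2y-1)/C_n$. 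At $y=-1$ one gets $F_1(z,-3)=2/(1-4z+\sqrt{1+4z})$. Its dominant singularity is the branch point $z=-\tfrac14$, on the same circle as the singularity $z=\tfrac14$ of $F_1(z,1)$. Singularity analysis gives $p_n(-3)\sim(-1)^nC_{n-1}$, hence $\mathbb E[(-1)^{M_n}]\to(-1)^n/4$. Already at $n=5$ the shape counts by $m=0,1,2,3$ are $10,8,2,1$, giving $\mathbb E[(-1)^{M_5}]=-\tfrac17$. Since $\mathbb E[(-1)^B]=0$ for $B\sim\Bin(n-2,\tfrac12)$ and $n\ge3$, you get $\dTV(\Law(M_n),\Bin(n-2,\tfrac12))\ge\tfrac18-o(1)$. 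So the likelihood ratio you hope is $1+O(|m-(n-2)/2|/n)$ is in fact about $1+\tfrac14(-1)^{n+m}$ in the bulk. Neither an exact formula nor a quasi-powers local limit theorem for $M_n$ can rescue this; the latter's aperiodicity hypothesis fails at $y=-1$, and in any case the bound you need is false.

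The thinning you discard is precisely what removes this oscillation: $\mathbb E[x^{X_n}]=\mathbb E[((1+x)/2)^{M_n}]$, and $|(1+x)/2|<1$ for $|x|=1$, $x\ne1$. Equivalently, for $x=e^{it}$ the second branch point $1/(x-1)$ and the pole $x/(x-1)$ of $F_1(\cdot,x)$ have modulus at least $\tfrac12$. The comparison therefore has to be made on the law of $X_n$ itself, which is what the paper does:
\begin{enumerate}
\item Uniform singularity analysis near $x=1$ gives $p_n(x)/C_n=A(x)B(x)^n(1+O(n^{-1}))$ with $B(x)=(x+3)/4$ and $A(1)B(1)^2=1$.
\item Hence the characteristic functions of $X_n$ and $\Bin(n-2,\tfrac14)$ differ by $O((|t|+n^{-1})e^{-cnt^2})$ for small $|t|$, and both are exponentially small for $\delta\le|t|\le\pi$.
\item Parseval then gives an $\ell^2$ distance $O(n^{-3/2})$, and Chernoff bounds dispose of the mass outside $|j-n/4|\le\sqrt{n\log n}$.
\item Cauchy--Schwarz on that window yields exactly $O(n^{-1/2}(\log n)^{1/4})$, a rate your sketch only asserts ``with care''.
\end{enumerate}
If you want to keep the mixture picture, you would need a comparison of $M_n$ with $\Bin(n-2,\tfrac12)$ that is blind to lattice-scale oscillations, combined with the smoothing effect of thinning. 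That amounts to the same Fourier analysis.
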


The moment formulas follow by differentiating the functional equations and
extracting coefficients, while the binomial approximation follows from
singularity analysis applied to \(F_1(z,x)\); both are proved in
Appendix~\ref{app:prop1}.


\paragraph{A holonomic recurrence.}
Eliminating \(g(n,x)\) yields the following third-order recurrence, valid for
\(n\ge0\) as an identity in \(\mathbb{Z}[x]\):
\[
\begin{aligned}
(n+4)x\,f_{321}(n+3,x)
={}&
\Bigl((5x^2+9x-4)+(2x^2+3x-1)n\Bigr)
f_{321}(n+2,x)
\\
&
-(x-1)
\Bigl((x^2+8x+5)+(x^2+5x+2)n\Bigr)
f_{321}(n+1,x)
\\
&
+(x-1)^2(x+3)(n+1)
f_{321}(n,x),
\end{aligned}
\]
with initial conditions
\[
f_{321}(0,x)=f_{321}(1,x)=1,
\qquad
f_{321}(2,x)=2.
\]
At \(x=0\) the leading coefficient vanishes, and the identity reduces to
\[
(n+4)M_{n+2}=(2n+5)M_{n+1}+3(n+1)M_n,
\]
the classical recurrence for the Motzkin numbers, consistent with
\(f_{321}(n,0)=M_n\).


\subsection{The class $\Av(312)$}

\begin{prop}
\label{prop:312}
If \(k\) is a double deficiency of \(\pi\), then \(\pi\) contains an occurrence of the pattern \(312\).
\end{prop}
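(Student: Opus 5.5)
Suppose $k$ is a double deficiency of $\pi$, so $\pi(k)<k<\pi^{-1}(k)$. Put $j=\pi^{-1}(k)$. The plan is to find three positions $a<k<j$ whose values form a $312$ pattern. Two of the three entries are already given: position $k$ carries the value $\pi(k)<k$, and position $j>k$ carries the value $k$. These satisfy $\pi(k)<\pi(j)=k$, so they will play the roles of ``$1$'' and ``$2$''. What remains is a position $a<k$ with $\pi(a)>k$, which will serve as the ``$3$''.

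We locate $a$ by counting. The prefix positions $\{1,\ldots,k-1\}$ receive $k-1$ distinct values. None of these values is $k$, because $k$ sits at position $j>k$. Suppose, for contradiction, that every $a<k$ has $\pi(a)<k$. Then the $k-1$ prefix values all lie in $\{1,\ldots,k-1\}$, so they exhaust this set. But $\pi(k)\le k-1$ must also lie in $\{1,\ldots,k-1\}$, and it is distinct from every prefix value, which is impossible. Hence some $a<k$ has $\pi(a)>k$.

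With this $a$ we have positions $a<k<j$ and values $\pi(k)<\pi(j)=k<\pi(a)$. This is exactly an occurrence of $312$.

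The only step needing care is the pigeonhole argument, and specifically the observation that $k$ itself is not among the prefix values, since this is what forces $\pi(a)>k$ strictly. Everything else is immediate. The statement then gives $f_{312}(n,x)=C_n$, because the DD statistic is identically zero on $\Av_n(312)$.
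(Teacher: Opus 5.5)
Your proof is correct and essentially the paper's argument: the paper also applies pigeonhole to the first $k-1$ positions, which can hold neither $\pi(k)$ nor $k$, so at most $k-2$ of them take values in $\{1,\ldots,k\}$, forcing some $i<k$ with $\pi(i)>k$. The entries $\pi(i),\pi(k),k$ then form the $312$ occurrence; the only difference is that you phrase the pigeonhole step as a contradiction rather than as a direct count.
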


\begin{proof}
Suppose \(k\) is a double deficiency of \(\pi\). Then
\(
\pi(k)<k<\pi^{-1}(k).
\)
Among the first \(k-1\) positions, neither the value \(\pi(k)\) nor the
value \(k\) can occur. Since there are \(k-1\) positions but only
\(k-2\) remaining values from \(\{1,\ldots,k\}\), the pigeonhole principle
implies that some position \(i<k\) satisfies
\(
\pi(i)>k.
\)
Hence,
\(
\pi(i)>k>\pi(k),
\)
so the entries
\(
\pi(i),\ \pi(k),\ k
\)
at positions \(i\), \(k\) and \(\pi^{-1}(k)\), respectively, form an
occurrence of the pattern \(312\).
\end{proof}

\begin{cor}
For every \(n\), every permutation in \(\operatorname{Av}_n(312)\) has no double deficiencies. Consequently,
\[
f_{312}(n,x)=C_n,
\]
where \(C_n\) denotes the \(n\)th Catalan number.
\end{cor}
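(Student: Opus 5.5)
This corollary follows almost immediately from Proposition~\ref{prop:312}; the plan is a contrapositive argument followed by an appeal to the classical Catalan enumeration.

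\textbf{Step 1.} Let \(\pi\in\Av_n(312)\) and suppose, for contradiction, that \(\operatorname{DD}(\pi)\ge1\). Then some index \(k\) is a double deficiency of \(\pi\). Proposition~\ref{prop:312} then produces an occurrence of \(312\) in \(\pi\), namely the entries \(\pi(i),\pi(k),k\) at positions \(i<k<\pi^{-1}(k)\). This contradicts \(\pi\in\Av_n(312)\), so \(\operatorname{DD}(\pi)=0\) for every \(\pi\in\Av_n(312)\).

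\textbf{Step 2.} Substitute into the definition of the generating function:
\[
f_{312}(n,x)=\sum_{\pi\in\Av_n(312)}x^{0}=|\Av_n(312)|.
\]
By the Simion--Schmidt enumeration recalled in Section~\ref{sec:intro}, every single-pattern class \(\Av_n(\sigma)\) with \(\sigma\in\mathfrak S_3\) has cardinality \(C_n\). Hence \(f_{312}(n,x)=C_n\).

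\textbf{Remarks on degenerate cases.} For \(n\le 2\) nothing special is needed. When \(n\in\{0,1,2\}\), no double deficiency can exist at all, and \(|\mathfrak S_n|=C_n\) for these \(n\). There is no real obstacle in this argument: the only content is Proposition~\ref{prop:312}, which is already proved, together with the known Catalan count.
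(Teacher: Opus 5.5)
Your proposal is correct and matches the paper's argument. The paper treats the corollary as immediate from Proposition~\ref{prop:312}: since a double deficiency forces a \(312\)-occurrence, every \(\pi\in\Av_n(312)\) has \(\operatorname{DD}(\pi)=0\), and so \(f_{312}(n,x)=|\Av_n(312)|=C_n\) by the Simion--Schmidt count. Your separate remark on \(n\le 2\) is harmless but not needed, because the contrapositive argument already covers every \(n\).
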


\begin{rem}
The converse of the proposition is false. For example, the permutation
\(3412\) contains the pattern \(312\) but has no double deficiency.
\end{rem}

\subsection{The class \(\Av(132)\)}
\label{sec:132}

Let \(\pi\in\Av_n(132)\), with its maximum in position \(i\), and write
\[
\pi=L\,n\,R.
\]
Avoidance of \(132\) forces every entry of \(L\) to exceed every entry
of \(R\); otherwise \(a,n,b\), with \(a\in L\), \(b\in R\), and
\(a<b\), is a \(132\)-pattern. Thus, as sets of values,
\[
L=\{n-i+1,\ldots,n-1\},
\qquad
R=\{1,\ldots,n-i\}.
\]
After standardization, both blocks remain \(132\)-avoiding. The statistic
is not preserved, however, because positions and values may be shifted by
different amounts; the following parameter records their difference.

\paragraph{Shifted double deficiencies.}

Suppose a block of length \(m\) occupies positions \(p+1,\ldots,p+m\),
has value set \(\{q+1,\ldots,q+m\}\), and standardizes to
\(\beta\in\mathfrak S_m\). Then
\[
\pi(p+j)=q+\beta(j),
\qquad 1\le j\le m.
\]

At local position \(j\), the global position is \(k=p+j\), while the
global value \(k\) becomes
\[
k-q=p+j-q=j+c,
\qquad c:=p-q.
\]
When \(1\le j+c\le m\),
\[
\pi(k)=q+\beta(j)
\qquad\text{and}\qquad
\pi^{-1}(k)=p+\beta^{-1}(j+c).
\]
Consequently,
\[
\pi(k)<k<\pi^{-1}(k)
\quad\Longleftrightarrow\quad
\beta(j)<j+c<\beta^{-1}(j+c)+c.
\]
This motivates the following definition.

\begin{defi}
\label{def:DD_c}
Let \(\pi\in \mathfrak S_n\) and \(c\in\mathbb Z\). The number of
\emph{shifted double deficiencies} of \(\pi\) with shift \(c\) is
\[
\operatorname{DD}_c(\pi)
=
\#\left\{
j\in\{1,\ldots,n\}:
1\le j+c\le n,\;
\pi(j)<j+c<\pi^{-1}(j+c)+c
\right\}.
\]
\end{defi}

Thus \(\operatorname{DD}_0(\pi)=\operatorname{DD}(\pi)\).

\begin{example}
Let
\(
\pi=31245\in\Av_5(132)
\text{ and }
c=1.
\)
Since \(\pi^{-1}=23145\), the shifted condition holds at precisely
\(j=3,4\):
\[
\begin{aligned}
\pi(3)=2 &< 3+1 < \pi^{-1}(4)+1=5,\\
\pi(4)=4 &< 4+1 < \pi^{-1}(5)+1=6.
\end{aligned}
\]
Thus \(\operatorname{DD}_1(\pi)=2\), whereas
\(
\pi(2)=1<2<\pi^{-1}(2)=3.
\)
Thus \(\operatorname{DD}(\pi)=1\).
\end{example}

\paragraph{Generating function.}

For integers \(n\ge0\) and \(c\in\mathbb Z\), define
\[
f(n,c,x)
=
\sum_{\pi\in\Av_n(132)}
x^{\operatorname{DD}_c(\pi)}.
\]
The ordinary generating function is
\(
f_{132}(n,x)=f(n,0,x).
\)

If the indices of \(\pi\) are counted by \(\operatorname{DD}_c\), then those of a
standardized block with offsets \(p,q\) are counted by
\(\operatorname{DD}_{c+p-q}\); that is, the block carries shift \(c+p-q\).
For \(\pi=L\,n\,R\), the offset differences are \(-(n-i)\) for \(L\) and \(i\)
for \(R\), so the left and right blocks contribute
\(f(i-1,c-(n-i),x)\) and \(f(n-i,c+i,x)\), respectively.

\begin{lem}
Let \(\pi=L\,n\,R\in\Av_n(132)\) with \(\pi(i)=n\), and let \(c\in\Z\).
The two recursive factors account for every shifted double deficiency
of \(\pi\), except possibly the position \(j=n-c\), whose shifted
value is \(j+c=n\). This position is a shifted double deficiency if and only if
\[
n\in\{c+1,\ldots,c+i-1\}.
\]
\end{lem}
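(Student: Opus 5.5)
The plan is a direct case analysis over the position \(j\) of \(\pi=L\,n\,R\). Recall that \(L\) occupies positions \(1,\ldots,i-1\) with values \(\{n-i+1,\ldots,n-1\}\), so its offsets are \(p=0\), \(q=n-i\) and its shift is \(c'=c-(n-i)\). The block \(R\) occupies positions \(i+1,\ldots,n\) with values \(\{1,\ldots,n-i\}\), so its offsets are \(p=i\), \(q=0\) and its shift is \(c''=c+i\). For each position \(j\) of \(\pi\) I would sort the target value \(j+c\) according to whether it lies in the value set of \(L\), of \(R\), or equals \(n\). I would then compare the global condition \(\pi(j)<j+c<\pi^{-1}(j+c)+c\) with the local condition defining \(\operatorname{DD}_{c'}\) or \(\operatorname{DD}_{c''}\) in the corresponding block.

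\emph{Matching case.} Suppose \(j\) lies in a block and \(j+c\) lies in the value set of the same block. Then \(\pi^{-1}(j+c)\) is also in that block, and the computation preceding Definition~\ref{def:DD_c} applies verbatim. Writing \(j=p+j_{\mathrm{loc}}\), the inequality \(\pi(j)<j+c\) becomes \(\beta(j_{\mathrm{loc}})<j_{\mathrm{loc}}+c_{\mathrm{blk}}\). The inequality \(j+c<\pi^{-1}(j+c)+c\), i.e.\ \(j<\pi^{-1}(j+c)\), becomes \(j_{\mathrm{loc}}<\beta^{-1}(j_{\mathrm{loc}}+c_{\mathrm{blk}})\). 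This is exactly the local shifted condition. The range requirement \(1\le j_{\mathrm{loc}}+c_{\mathrm{blk}}\le m\) is equivalent to \(j+c\) lying in the block's value set. So these positions are counted by the recursive factor \(f(i-1,c-(n-i),x)\) or \(f(n-i,c+i,x)\), and by nothing else.

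\emph{Mismatched cases.} These contribute nothing globally, and the block factors also ignore them because their local range condition fails.
\begin{itemize}
\item If \(j\in L\) and \(j+c\le n-i\), then \(\pi(j)\ge n-i+1>j+c\), so the condition fails.
\item If \(j=i\), then \(\pi(j)=n\ge j+c\), so the condition fails.
\item If \(j\in R\) and \(j+c\ge n-i+1\), then \(\pi^{-1}(j+c)\le i<j\), which violates \(j<\pi^{-1}(j+c)\). This includes the value \(j+c=n\).
\end{itemize}

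\emph{Exceptional position.} The only remaining case is \(j\in L\) with \(j+c=n\), i.e.\ \(j=n-c\). Here \(\pi(j)\le n-1<n\) holds automatically. Since \(\pi^{-1}(n)=i\), the second inequality reads \(n<i+c\), i.e.\ \(j<i\), which is already implied by \(j\in L\). So this position is a shifted double deficiency exactly when \(1\le n-c\le i-1\), i.e.\ \(n\in\{c+1,\ldots,c+i-1\}\). In every other configuration, including when \(n-c\) falls outside \(\{1,\ldots,i-1\}\), the position \(j=n-c\) either does not exist or fails by the mismatched cases.

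The main obstacle is bookkeeping rather than ideas. I must check carefully that the local range condition \(1\le j_{\mathrm{loc}}+c_{\mathrm{blk}}\le m\) coincides exactly with ``\(j+c\) is a value of the same block'', so that no index is double counted or lost. For \(L\) this means \(n-i+1\le j+c\le n-1\); the endpoint \(j+c=n\) is excluded, which is precisely why the position \(j=n-c\) has to be treated separately.
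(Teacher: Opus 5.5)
Your proposal is correct and follows the same case analysis as the paper's proof. The separating position $i$ fails the first inequality, $L$-to-$R$ crossings fail the first inequality, $R$-to-$L$ crossings (including the value $n$) fail the second, and the only remaining candidate $j=n-c\in L$ contributes exactly when $1\le n-c\le i-1$. Your explicit check that the block-local range condition coincides with ``$j+c$ lies in the same block's value set'' is a step the paper leaves implicit, but the argument is the same.
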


\begin{proof}
The position \(i\) cannot contribute because \(\pi(i)=n\). If a position
lies in \(L\) while its shifted value lies in \(R\), the first inequality
fails; in the reverse situation, the shifted value occurs to the left and
the second inequality fails. The only remaining shifted value is \(n\),
which forces \(j=n-c\). Since \(\pi^{-1}(n)=i\), its two inequalities are
\(\pi(n-c)<n<i+c\). They hold exactly when \(n-c\) lies in \(L\), namely
when \(1\le n-c\le i-1\): membership in \(L\) gives the first inequality,
and \(n-c<i\) gives the second. Equivalently,
\(n\in\{c+1,\ldots,c+i-1\}\).
\end{proof}

\begin{prop}
\label{prop:132}
For every $n\ge1$ and $c\in\mathbb Z$,
\[
f(n,c,x)
=
\sum_{i=1}^{n}
x^{\Delta(n,c,i)}
f\bigl(i-1,c-(n-i),x\bigr)
f(n-i,c+i,x),
\]
where
\[
\Delta(n,c,i)
=
\begin{cases}
1,
&
n\in\{c+1,\ldots,c+i-1\},
\\
0,
&
\text{otherwise},
\end{cases}
\]
with initial condition
\[
f(0,c,x)=1.
\]
In particular,
\[
f_{132}(n,x)=f(n,0,x).
\]
\end{prop}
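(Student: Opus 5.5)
We prove the recurrence by a bijective decomposition of \(\Av_n(132)\) according to the position \(i\) of the maximum, and then track \(\operatorname{DD}_c\) additively through the decomposition.

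\textbf{Step 1: the bijection.} By the block structure established above, the map
\[
\pi=L\,n\,R\;\longmapsto\;(i,\operatorname{st}(L),\operatorname{st}(R))
\]
is a bijection from \(\Av_n(132)\) onto \(\bigsqcup_{i=1}^n \Av_{i-1}(132)\times\Av_{n-i}(132)\). Here \(L\) occupies positions \(1,\ldots,i-1\) with values \(\{n-i+1,\ldots,n-1\}\), and \(R\) occupies positions \(i+1,\ldots,n\) with values \(\{1,\ldots,n-i\}\). Surjectivity needs a check: gluing any two \(132\)-avoiders in this way avoids \(132\). An occurrence cannot use \(n\) as its `1' or `2'. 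If it uses \(n\) as its `3', it needs an entry \(a\in L\) and a larger entry \(b\in R\), which is impossible. An occurrence straddling \(L\) and \(R\) without \(n\) would need its smallest entry `1' to lie before the `3' and `2'. If that `1' is in \(L\), then the `3' or `2' is in \(R\) and hence smaller, a contradiction. If everything lies in one block, the block avoids \(132\) by hypothesis.

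\textbf{Step 2: partition the counted indices.} Fix \(c\), and classify each index \(j\) counted by \(\operatorname{DD}_c(\pi)\) by the block containing \(j\) and the block containing the shifted value \(j+c\) (one of \(L\), \(\{n\}\), \(R\)).

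Within-block pairs are handled by the shift computation preceding Definition~\ref{def:DD_c}, applied with global shift \(c\). A block at position offset \(p\) and value offset \(q\), standardizing to \(\beta\), has its within-block indices counted exactly by \(\operatorname{DD}_{c+p-q}(\beta)\). The condition \(1\le j+c\le m\) in that definition is exactly the condition that the shifted value lies in the same block. The offsets are \(p=0,\ q=n-i\) for \(L\), and \(p=i,\ q=0\) for \(R\). This gives shifts \(c-(n-i)\) and \(c+i\), respectively.

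Cross-block pairs and the value \(n\) are handled by the preceding Lemma. It shows that cross pairs \(L\)/\(R\) never count, that position \(i\) never counts, and that the only extra contribution is the index \(j=n-c\) whose shifted value is \(n\). That index counts precisely when \(\Delta(n,c,i)=1\).

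Hence \(\operatorname{DD}_c(\pi)=\Delta(n,c,i)+\operatorname{DD}_{c-(n-i)}(\operatorname{st}L)+\operatorname{DD}_{c+i}(\operatorname{st}R)\).

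\textbf{Step 3: sum over the bijection.} Summing \(x^{\operatorname{DD}_c(\pi)}\) over the bijection of Step 1 factors as a product of the two block sums. This yields the stated recurrence. The base case \(f(0,c,x)=1\) holds because the empty permutation has \(\operatorname{DD}_c=0\), and \(\operatorname{DD}_0=\operatorname{DD}\) gives \(f_{132}(n,x)=f(n,0,x)\).

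\textbf{Main obstacle.} The delicate step is the bookkeeping in Step 2. One must verify that the shifted inequality \(\pi(k)<k<\pi^{-1}(k)\), rewritten in local coordinates, reproduces exactly \(\beta(j)<j+c'<\beta^{-1}(j+c')+c'\) with \(c'=c+p-q\), including the range condition. One must also confirm that the Lemma's case analysis, for cross-block pairs involving \(n\), is compatible with the same global shift \(c\) rather than \(0\). I would recheck the derivation with \(c\) carried throughout, i.e.\ with global target value \(k+c\) in place of \(k\), before concluding.
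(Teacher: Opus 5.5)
Your proposal is correct and follows essentially the same route as the paper. Both arguments decompose $\pi=L\,n\,R$ by the position of the maximum, count within-block indices through the block shift $c+p-q$ (giving $c-(n-i)$ for $L$ and $c+i$ for $R$), and invoke the Lemma for the single extra index $j=n-c$.

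The recheck you flag goes through. With global target $k+c$ and $k=p+j$, the local target is $j+(c+p-q)$, and both strict inequalities and the range condition transform exactly as in the $c=0$ computation. The Lemma is already stated for arbitrary $c\in\mathbb Z$. Your explicit surjectivity check for the gluing map is a small detail the paper leaves implicit.
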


Figure~\ref{fig:132dist} shows the distribution obtained from the
recurrence at \(n=140\).

\begin{figure}
\centering
\includegraphics[width=0.75\textwidth]{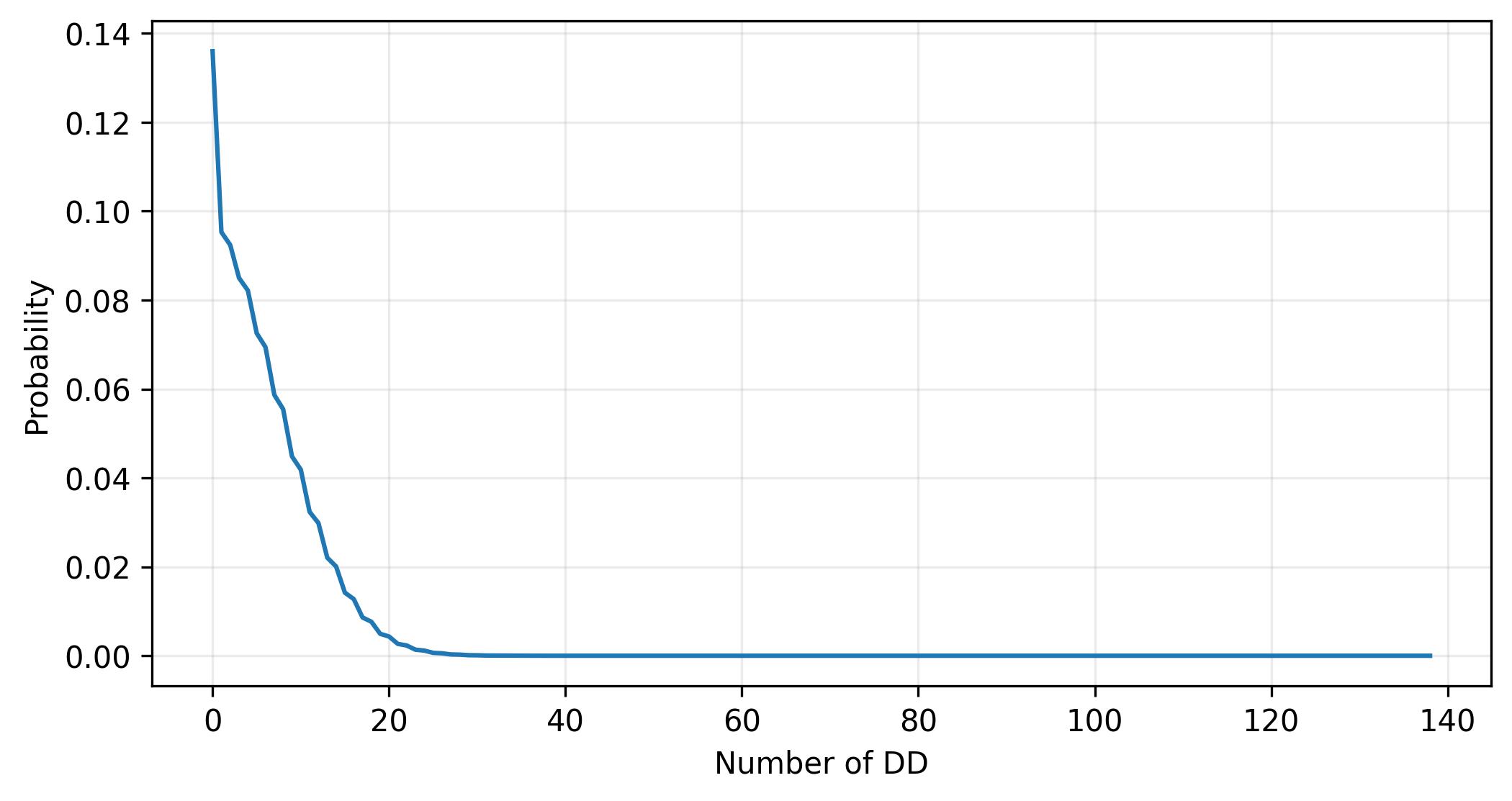}
\caption{Probability distribution of the number of double deficiencies
in \(\Av_{140}(132)\).}
\label{fig:132dist}
\end{figure}

The distribution is strongly asymmetric and concentrated near small
values; Figure~\ref{fig:132mean} shows the corresponding mean.

\begin{figure}
\centering
\includegraphics[width=0.75\textwidth]{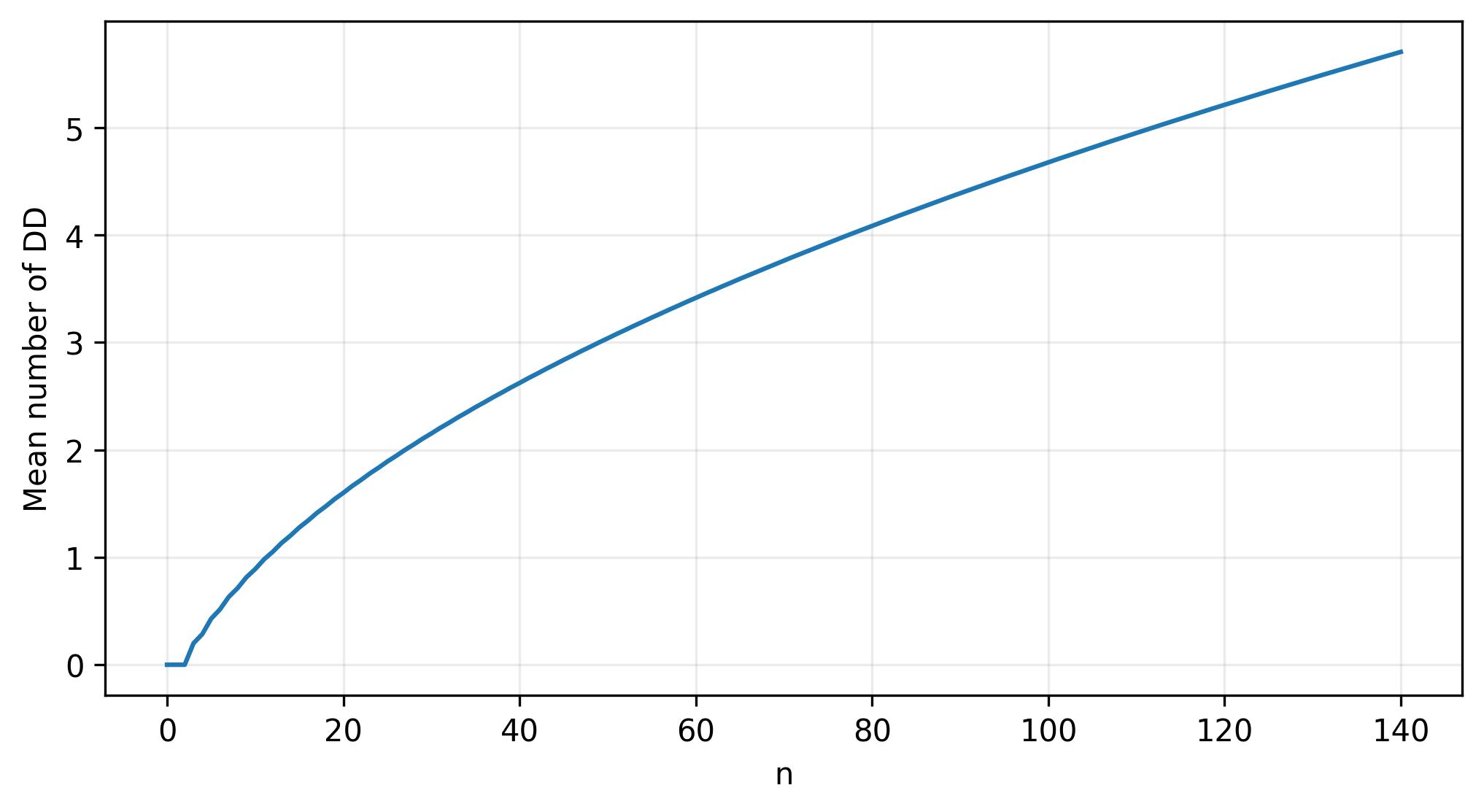}
\caption{Mean number of double deficiencies in \(132\)-avoiding
permutations of length \(n\), for \(0\le n\le140\).}
\label{fig:132mean}
\end{figure}


\subsection{The class $\Av(213)$}
\label{sec:213}

For this subsection, let
\[
f(n,c,x)
=
\sum_{\pi\in\Av_n(213)}
x^{\operatorname{DD}_c(\pi)}.
\]

If \(\pi\in\Av_n(213)\) has its minimum in position \(i\), write
\[
\pi=L\,1\,R.
\]
Avoidance of \(213\) gives, as sets of values, 
\[
L=\{n-i+2,\ldots,n\},
\qquad
R=\{2,\ldots,n-i+1\}.
\]
The standardized blocks remain \(213\)-avoiding and have shifts
\(c-(n-i+1)\) and \(c+i-1\). Cross-block contributions fail as in the
$\Av(132)$  case. The separating position \(i\) contributes exactly when
its shifted value \(i+c\) lies in \(R\), namely when
\(i+c\in\{2,\ldots,n-i+1\}\).

\begin{prop}
For every $n\ge1$ and $c\in\mathbb Z$,
\[
f(n,c,x)
=
\sum_{i=1}^{n}
x^{\Delta(n,c,i)}
f\bigl(i-1,c-(n-i+1),x\bigr)
f\bigl(n-i,c+i-1,x\bigr),
\]
where
\[
\Delta(n,c,i)
=
\begin{cases}
1,
&
i+c\in\{2,\ldots,n-i+1\},
\\
0,
&
\text{otherwise},
\end{cases}
\]
with initial condition
\[
f(0,c,x)=1.
\]
In particular,
\[
f_{213}(n,x)=f(n,0,x).
\]
\end{prop}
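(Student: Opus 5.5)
The plan mirrors the proof of Proposition~\ref{prop:132}. Fix \(\pi\in\Av_n(213)\) with \(\pi(i)=1\), and write \(\pi=L\,1\,R\). First we establish the block structure. If some \(a\in L\) and \(b\in R\) satisfied \(a<b\), then \(a,1,b\) would be a \(213\)-occurrence. Hence every entry of \(L\) exceeds every entry of \(R\). As value sets this gives \(L=\{n-i+2,\ldots,n\}\) and \(R=\{2,\ldots,n-i+1\}\).

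The map \(\pi\mapsto(i,\mathrm{st}(L),\mathrm{st}(R))\) is then a bijection onto \(\{1,\ldots,n\}\times\Av_{i-1}(213)\times\Av_{n-i}(213)\). Indeed, any pair of \(213\)-avoiders reassembled this way avoids \(213\): an occurrence cannot use \(1\) as its ``1'' with the ``2'' in \(L\) and the ``3'' in \(R\), by the value separation, and it cannot straddle the blocks in any other way either.

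Next we compute shifts using the block computation preceding Definition~\ref{def:DD_c}. \(L\) occupies positions \(p=0\) with offset \(q=n-i+1\), so it carries shift \(c-(n-i+1)\). \(R\) has \(p=i\) and \(q=1\), so it carries shift \(c+i-1\). By that computation, each index \(j\) of \(\pi\) whose position and shifted value \(j+c\) lie in the same block is a \(\operatorname{DD}_c\)-index of \(\pi\) exactly when the corresponding local index is a shifted double deficiency of the standardized block. One should check that the range condition \(1\le j+c\le n\) matches the local condition \(1\le j'+c'\le m\) when both lie in the block; this holds because the shifted value lies in the block's value set.

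Then we handle the remaining indices, as in the lemma for \(\Av(132)\). Suppose position \(j\) is in \(L\) and the shifted value \(j+c\) is in \(R\). Then \(\pi(j)>j+c\), so the first inequality fails. Suppose instead \(j\in R\) and \(j+c\in L\). Then \(\pi^{-1}(j+c)<i<j\), so \(\pi^{-1}(j+c)+c<j+c\), and the second inequality fails. Two cases remain.

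\begin{itemize}
\item \emph{Shifted value equal to \(1\).} Here \(j=1-c\), and the condition \(\pi(j)<1\) is impossible.
\item \emph{Position equal to \(i\).} Here \(\pi(i)=1<i+c\) holds provided \(i+c\ge2\).
\begin{itemize}
\item If \(i+c\in R\), then \(\pi^{-1}(i+c)>i\), so \(\pi^{-1}(i+c)+c>i+c\) and the index counts.
\item If \(i+c\in L\), the second inequality fails.
\item If \(i+c=1\) or \(i+c\) lies outside \([1,n]\), the index does not count.
\end{itemize}
So position \(i\) contributes exactly when \(i+c\in\{2,\ldots,n-i+1\}\), which is \(\Delta(n,c,i)\).
\end{itemize}

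Summing \(x^{\operatorname{DD}_c}\) over the bijection and over \(i\) gives the recurrence. The initial value \(f(0,c,x)=1\) is trivial, and \(c=0\) recovers \(f_{213}\).

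The main obstacle is the bookkeeping of the shift conventions. The step most likely to hide an off-by-one error is verifying that the formula \(\beta(j)<j+c'<\beta^{-1}(j+c')+c'\) holds with the correct \(c'=c+p-q\) for each block, including the boundary cases. I would check these against small examples such as \(n=3\).
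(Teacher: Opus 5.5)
Your proof is correct and follows the paper's argument. You decompose at the minimum, use the value separation $L>R$ to get the block shifts $c-(n-i+1)$ and $c+i-1$, rule out cross-block contributions as in the $\Av(132)$ case, and show that the separating position $i$ contributes exactly when $i+c\in R=\{2,\ldots,n-i+1\}$; you are also more explicit than the paper about the reassembly bijection and about matching the range conditions $1\le j+c\le n$ with their local versions.
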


\begin{proof}
The block shifts count all contributions internal to \(L\) and \(R\),
and the cross-block argument from the $\Av(132)$  case rules out the other
crossings. At the separating position, occupied by \(1\), the condition is
\[
1<i+c<\pi^{-1}(i+c)+c.
\]
It holds exactly when \(i+c\in R=\{2,\ldots,n-i+1\}\): then
\(1<i+c\) and \(\pi^{-1}(i+c)>i\), giving the two strict inequalities.
Thus the factor \(x^{\Delta(n,c,i)}\) records the only possible extra
contribution, and summing over \(i\) gives the formula.
\end{proof}

The DD-preserving bijection described in Section~\ref{sec:DD-Wilf}
also shows that
\[
f_{213}(n,x)=f_{132}(n,x).
\]

\subsection{The class $\Av(231)$}
\label{sec:231}

For \(n\ge0\) and \(c\ge0\), set
\[
f(n,c,x)
=
\sum_{\pi\in\Av_n(231)}
x^{\operatorname{DD}_c(\pi)}.
\]
If the maximum of
\(\pi\in\Av_n(231)\) is in position \(i\), write
\[
\pi=L\,n\,R.
\]
Avoidance of \(231\) forces every entry of \(L\) below every entry of
\(R\), and hence, as sets of values,
\[
L=\{1,\ldots,i-1\},
\qquad
R=\{i,\ldots,n-1\}.
\]

The two standardized blocks contribute \(f(i-1,c,x)\) and
\(f(n-i,c+1,x)\). Their interaction is described next.

\begin{lem}
\label{lem:231}
Let \(\pi=L\,n\,R\in\Av_n(231)\), where \(\pi(i)=n\), and suppose that
\(0\le c<n\). The shifted double deficiencies not counted internally by
the two recursive factors are precisely the positions \(j\) satisfying
\[
1\le j\le i-1
\qquad\text{and}\qquad
i\le j+c\le n.
\]
Their number is
\[
\Delta(n,c,i)
=
\min\{c,\ i-1,\ n-c,\ n-i+1\}.
\]
\end{lem}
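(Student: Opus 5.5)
The plan is to classify every position \(j\) of \(\pi=L\,n\,R\) by which block contains \(j\) and which block contains its shifted value \(v=j+c\). I would record the block data in the notation of the shifted-double-deficiency computation preceding Definition~\ref{def:DD_c}. The block \(L\) occupies positions \(1,\ldots,i-1\) with values \(\{1,\ldots,i-1\}\), so \(p=q=0\) and its shift is \(c\). The block \(R\) occupies positions \(i+1,\ldots,n\) with values \(\{i,\ldots,n-1\}\), so \(p=i\), \(q=i-1\), and its shift is \(c+1\). By that computation, a position \(j\) whose shifted value \(v\) lies in the same block as \(j\) is counted exactly by the corresponding factor \(f(i-1,c,x)\) or \(f(n-i,c+1,x)\). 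It therefore remains to examine the separating position \(i\) and the cross-block pairs, including the case \(v=n\), whose preimage is \(i\).

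First I would dispose of the separating position \(j=i\). Since \(\pi(i)=n\), the inequality \(\pi(i)<i+c\) would force \(i+c>n\), which violates the range condition \(j+c\le n\). Next, a position \(j\in R\) can never have \(v\in L\cup\{n\}\) as a shifted double deficiency. If \(v\le i-1\), this is impossible because \(c\ge0\) gives \(v\ge j>i\). If \(v=n\), then \(\pi^{-1}(n)+c=i+c<j+c=n\), so the second inequality fails.

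The remaining case is \(j\in L\) and \(v\ge i\), that is, \(v\in R\) or \(v=n\). Here I would check that both inequalities hold automatically. The first holds because \(\pi(j)\le i-1<i\le v\). For the second, \(v\) sits at position \(\pi^{-1}(v)\ge i>j\), so \(\pi^{-1}(v)+c>j+c=v\). Hence the uncounted shifted double deficiencies are exactly those \(j\) with \(1\le j\le i-1\) and \(i\le j+c\le n\), as claimed.

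Finally, I would count the integers in \([1,i-1]\cap[i-c,\,n-c]\). This number is \(\min(i-1,n-c)-\max(1,i-c)+1\) when it is nonnegative. Expanding the min and max gives the four candidates \(c\), \(i-1\), \(n-c\), and \(n-i+1\). Each of these is nonnegative because \(0\le c<n\) and \(1\le i\le n\), so no truncation at \(0\) is needed. The main obstacle is only bookkeeping: one must make sure the case \(v=n\) (the value at the separating position) and the boundary \(v=i\) are assigned to the right case, and that the two internal factors really use the shifts \(c\) and \(c+1\). Everything else is a direct check of the inequalities.
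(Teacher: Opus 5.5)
Your proof is correct and takes essentially the same route as the paper. You rule out the separating position $i$ and every position in $R$, including the case where the shifted value $n$ sits at position $i<j$. You then show that both inequalities hold automatically for $j\le i-1$ with $i\le j+c\le n$, and you count $[1,i-1]\cap[i-c,n-c]$ by comparing its endpoints. Your explicit checks that the block shifts are $c$ and $c+1$, and that all four candidates in the minimum are nonnegative, fill in two points the paper leaves implicit.
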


\begin{proof}
No new contribution can have its position in \(R\): if \(j+c<n\) it
is internal to \(R\), while if \(j+c=n\) the value occurs at position
\(i<j\). The position \(i\) also fails because \(\pi(i)=n\). Thus only a
position in \(L\) whose shifted value lies in \(R\cup\{n\}\) can give a new contribution,
which is equivalent to
\[
1\le j\le i-1
\qquad\text{and}\qquad
i\le j+c\le n.
\]
For such \(j\), \(\pi(j)\le i-1<j+c\), and the value \(j+c\) occurs to
the right of \(j\); hence both inequalities hold. Therefore,
\[
\Delta(n,c,i)
=
\#\bigl([1,i-1]\cap[i-c,n-c]\bigr).
\]
The intersection has lower endpoint \(\max(1,i-c)\) and upper endpoint
\(\min(i-1,n-c)\). Comparing the two possible upper endpoints with the
two possible lower endpoints gives
\[
\Delta(n,c,i)
=\min\{c,i-1,n-c,n-i+1\}.
\]

\end{proof}

\begin{prop}
For \(n\ge1\) and \(0\le c<n\),
\[
f(n,c,x)
=
\sum_{i=1}^{n}
x^{\Delta(n,c,i)}
f(i-1,c,x)
f(n-i,c+1,x),
\]
where
\[
\Delta(n,c,i)
=
\min\{c,\ i-1,\ n-c,\ n-i+1\}.
\]
The initial and boundary conditions are
\[
f(0,c,x)=1
\]
and
\[
f(n,c,x)=C_n,
\qquad c\ge n,
\]
where \(C_n\) is the \(n\)th Catalan number. In particular,
\[
f_{231}(n,x)=f(n,0,x).
\]
\end{prop}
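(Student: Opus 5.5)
We prove the recurrence by decomposing each \(\pi\in\Av_n(231)\) at the position \(i\) of its maximum, so that \(\pi=L\,n\,R\). We then sum over \(i\) and over the \(213\)-free choices of the standardized blocks. Avoidance of \(231\) gives the value sets \(L=\{1,\ldots,i-1\}\) on positions \(1,\ldots,i-1\) and \(R=\{i,\ldots,n-1\}\) on positions \(i+1,\ldots,n\). Conversely, any pair of \(231\)-avoiding standardized blocks, reassembled this way, yields a \(231\)-avoiding permutation. Indeed, a \(231\) occurrence cannot use \(n\) as its middle entry, since everything left of \(n\) is smaller than everything right of it. It also cannot straddle the blocks, since entries of \(L\) are both earlier and smaller than those of \(R\). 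So the map \(\pi\mapsto(i,\operatorname{std}L,\operatorname{std}R)\) is a bijection onto \(\bigcup_i \Av_{i-1}(231)\times\Av_{n-i}(231)\).

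Next we check the shifts using the offset rule from the \(\Av(132)\) subsection: a block with position offset \(p\) and value offset \(q\) carries shift \(c+p-q\). For \(L\) we have \(p=q=0\), so its shift is \(c\). For \(R\) we have \(p=i\) and \(q=i-1\), so its shift is \(c+1\). Hence the internal contributions are exactly \(\operatorname{DD}_c(\operatorname{std}L)\) and \(\operatorname{DD}_{c+1}(\operatorname{std}R)\), computed under the restriction that both the position and the shifted value lie in the same block. Lemma~\ref{lem:231} shows that the remaining shifted double deficiencies number exactly \(\Delta(n,c,i)=\min\{c,i-1,n-c,n-i+1\}\) when \(0\le c<n\), and this count depends only on \(i\), not on the blocks. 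So \(x^{\operatorname{DD}_c(\pi)}=x^{\Delta}\,x^{\operatorname{DD}_c(L)}x^{\operatorname{DD}_{c+1}(R)}\), and summing over the bijection gives the recurrence.

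Two details need care. The first is that a shifted double deficiency of a block in the definition of \(\operatorname{DD}\) already requires \(1\le j+c\le m\), which matches "shifted value inside the block". The second is that the recursive calls stay inside the region where the recurrence or the boundary condition applies. For \(f(i-1,c,x)\) and \(f(n-i,c+1,x)\), the shift may satisfy \(c\ge i-1\) or \(c+1\ge n-i\); in that case we invoke the boundary condition, so it must be proved. The shifts never become negative, because they start at \(c\ge0\) and only increase, which is why \(c\ge0\) suffices.

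The boundary claim \(f(n,c,x)=C_n\) for \(c\ge n\) is immediate when \(n\ge1\): the constraint \(1\le j+c\le n\) requires \(j\le n-c\le0\), which is impossible, so \(\operatorname{DD}_c\equiv0\) and the sum is \(|\Av_n(231)|=C_n\). The case \(n=0\) is \(f(0,c,x)=1=C_0\). Finally, \(\operatorname{DD}_0=\operatorname{DD}\) gives \(f_{231}(n,x)=f(n,0,x)\). The main obstacle is the bookkeeping in the middle step: we must confirm that the internal counts of the blocks correspond exactly to their standardized shifted statistics. This amounts to rechecking the computation \(\pi(k)<k<\pi^{-1}(k)\iff\beta(j)<j+c'<\beta^{-1}(j+c')+c'\) for \(R\) with its offset \(c'=c+1\); the rest is supplied by Lemma~\ref{lem:231}.
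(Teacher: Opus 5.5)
Your proof is correct and follows the paper's argument. You decompose at the maximum, use the offset rule to get shifts \(c\) and \(c+1\) for the two blocks, invoke Lemma~\ref{lem:231} for the cross-block terms and sum over \(i\), and handle the boundary case by noting that \(1\le j+c\le n\) is impossible when \(c\ge n\). Your explicit check that reassembled blocks remain \(231\)-avoiding fills in a step the paper leaves implicit. One slip: ``\(213\)-free'' in your first paragraph should read ``\(231\)-free''.
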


\begin{proof}
Sum the decomposition in the lemma over \(i\). The boundary condition
follows because \(c\ge n\) leaves no admissible shifted value, so every
one of the \(|\Av_n(231)|=C_n\) permutations has weight \(1\).
\end{proof}

Figure~\ref{fig:231dist} shows the distribution computed at \(n=120\).

\begin{figure}[ht]
\centering
\includegraphics[width=0.75\textwidth]{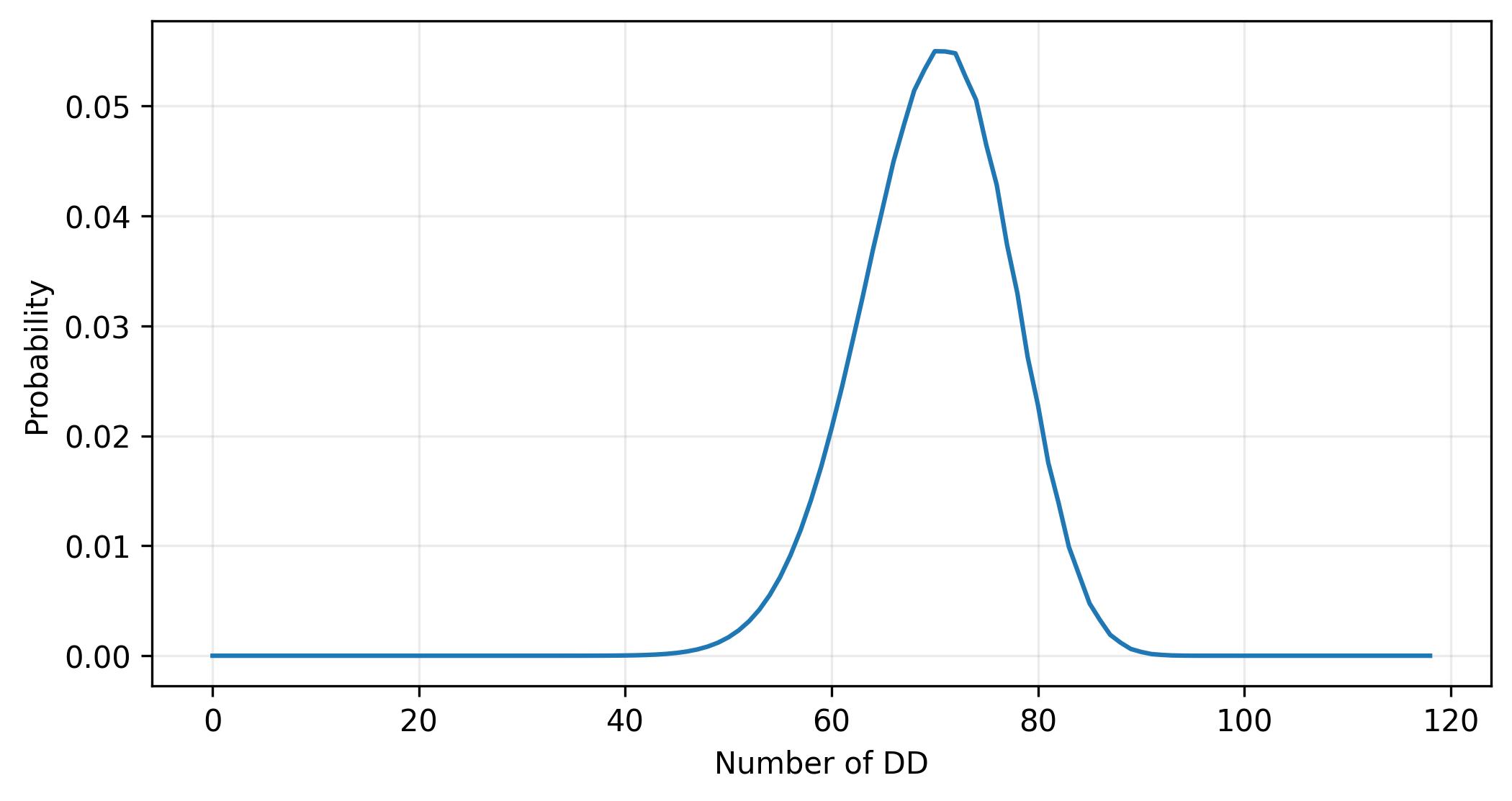}
\caption{Probability distribution of the number of double deficiencies
in \(\Av_{120}(231)\).}
\label{fig:231dist}
\end{figure}

Empirically, \(\Av(231)\) has the largest mean among the six
single-pattern classes; unlike the other recurrences, its interaction
term may add several shifted deficiencies at once. Figure~\ref{fig:231mean}
shows this mean as a function of \(n\).

\begin{figure}[h]
\centering
\includegraphics[width=0.75\textwidth]{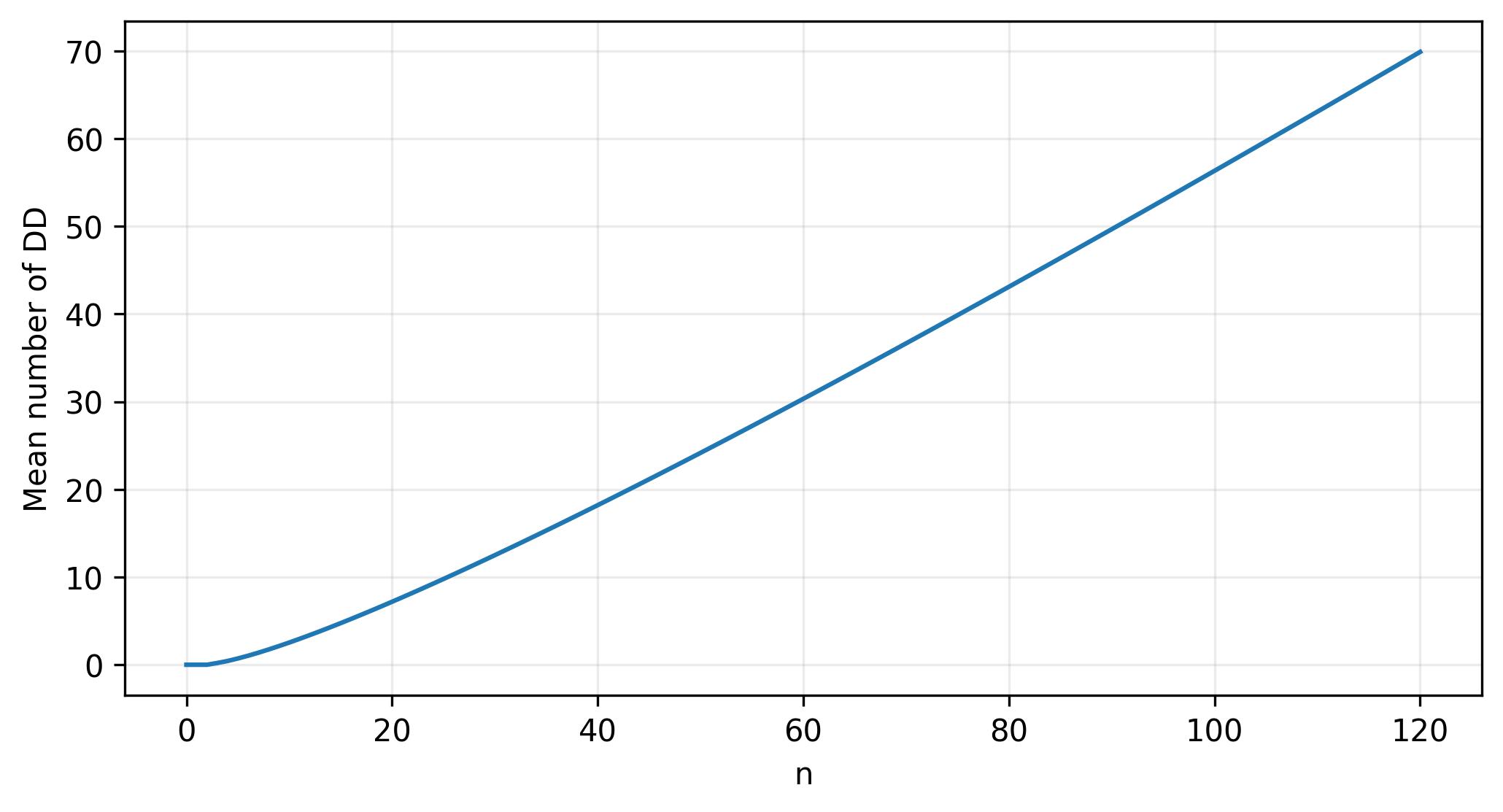}
\caption{Mean number of double deficiencies in \(231\)-avoiding
permutations of length \(n\), for \(0\le n\le120\).}
\label{fig:231mean}
\end{figure}


\subsection{The class $\Av(123)$}

The class \(\Av(123)\) does not appear to admit a decomposition analogous to those used for \(\Av(132)\), \(\Av(213)\), or \(\Av(231)\).
A path-based approach analogous in spirit to that used for \(\Av(321)\) may be possible. However, the relevant indices do not align with the double-deficiency condition, and we have not obtained a closed recurrence for \(f_{123}(n,x)\).

\begin{openproblem}
\label{op:123}
Find an efficient method for computing the generating functions
\[
f_{123}(n,x)
=
\sum_{\pi\in\Av_n(123)}
x^{\operatorname{DD}(\pi)}.
\]
\end{openproblem}

\section{DD-Wilf equivalences}
\label{sec:DD-Wilf}

For a set of patterns $S$, recall that
\[
f_S(n,x)
=
\sum_{\pi\in\Av_n(S)}
x^{\operatorname{DD}(\pi)}
\]
is the generating function for the distribution of double deficiencies over
$\Av_n(S)$. 
Following the standard terminology of statistic-Wilf
equivalence \cite{DokosEtAl2012}, we say that two sets of patterns $S$
and $T$ are \emph{DD-Wilf equivalent} if
\[
f_S(n,x)=f_T(n,x)
\]
for every $n\geq 0$.

To establish several DD-Wilf equivalences, we use the following standard transformations of permutations \cite{SimionSchmidt1985}. For $\pi\in\mathfrak S_n$, let $R(\pi)$, $C(\pi)$, and $I(\pi)$ denote its reverse, complement, and inverse, respectively, defined by
\[
R(\pi)(i)=\pi(n+1-i),\qquad
C(\pi)(i)=n+1-\pi(i),\qquad
I(\pi)=\pi^{-1}.
\]

We consider the composition
\[
\Phi=C\circ R\circ I.
\]
Explicitly,
\[
\Phi(\pi)(i)
=
n+1-\pi^{-1}(n+1-i).
\]

For the proof below, let
\[
\mathcal{D}(\pi)
=
\left\{
k\in[n]:
\pi(k)<k<\pi^{-1}(k)
\right\}
\]
denote the set of double-deficiency indices of $\pi$.  Thus,
\(
\operatorname{DD}(\pi)=|\mathcal{D}(\pi)|.
\)

\begin{prop}
For every $\pi\in\mathfrak S_n$,
\[
\operatorname{DD}(\Phi(\pi))
=
\operatorname{DD}(\pi).
\]
More precisely,
\[
\mathcal{D}(\Phi(\pi))
=
\{\,n+1-k:k\in \mathcal{D}(\pi)\,\}.
\]
\end{prop}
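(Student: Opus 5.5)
The plan is a direct computation. Write $\sigma=\Phi(\pi)$ and $\bar m=n+1-m$, and show that $k\in\mathcal D(\pi)$ holds exactly when $\bar k\in\mathcal D(\sigma)$. Since $k\mapsto\bar k$ is a bijection of $[n]$, this gives both the set identity and the equality of cardinalities.

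First, compute the inverse of $\sigma$. From $\sigma(i)=\overline{\pi^{-1}(\bar i)}$, set $j=\sigma(i)$. Then $\pi^{-1}(\bar i)=\bar j$, so $\bar i=\pi(\bar j)$, and therefore
\[
\sigma^{-1}(j)=\overline{\pi(\bar j)}.
\]
This reflects the fact that $C\circ R$ conjugates by the reversal $w_0$, so that $\sigma=w_0\pi^{-1}w_0$ and $\sigma^{-1}=w_0\pi w_0$.

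Second, evaluate the double-deficiency condition for $\sigma$ at the index $\bar k$:
\[
\sigma(\bar k)=\overline{\pi^{-1}(k)},\qquad \sigma^{-1}(\bar k)=\overline{\pi(k)}.
\]
The condition $\sigma(\bar k)<\bar k<\sigma^{-1}(\bar k)$ therefore reads
\[
\overline{\pi^{-1}(k)}<\bar k<\overline{\pi(k)}.
\]
Because $m\mapsto\bar m$ reverses order, this is equivalent to $\pi^{-1}(k)>k>\pi(k)$, which is exactly $k\in\mathcal D(\pi)$.

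The only real obstacle is bookkeeping in the first step: getting the explicit inverse of $\Phi(\pi)$ right. I would double-check that formula on a small example, such as $\pi=4251736$ with $\mathcal D(\pi)=\{6\}$, where the predicted set is $\mathcal D(\Phi(\pi))=\{2\}$. The remaining steps are immediate order reversals.
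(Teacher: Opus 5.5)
Your proposal is correct and follows the paper's proof: you compute $\Phi(\pi)^{-1}(j)=n+1-\pi(n+1-j)$, substitute the index $n+1-k$ into the double-deficiency condition, and use the order reversal of $m\mapsto n+1-m$ to recover $\pi(k)<k<\pi^{-1}(k)$. The remark that $\Phi(\pi)=w_0\pi^{-1}w_0$ is a tidy way to see the inverse formula, but it is not needed.
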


\begin{proof}
Let
\(
\rho=\Phi(\pi)=C(R(I(\pi))).
\)
Then
\(
\rho(i)
=
n+1-\pi^{-1}(n+1-i).
\)
Since
\(
\rho^{-1}(i)
=
n+1-\pi(n+1-i),
\)
the index $i$ is a double deficiency of $\rho$ precisely when
\[
n+1-\pi^{-1}(n+1-i)
<
i
<
n+1-\pi(n+1-i).
\]
Set
\(
k=n+1-i.
\)
The preceding inequalities are equivalent to
\(
\pi(k)<k<\pi^{-1}(k).
\)
Thus,
\[
i\in \mathcal{D}(\rho)
\quad\Longleftrightarrow\quad
n+1-i\in \mathcal{D}(\pi).
\]
Hence,
\(
\mathcal{D}(\Phi(\pi))
=
\{\,n+1-k:k\in \mathcal{D}(\pi)\,\},
\)
and therefore
\(
\operatorname{DD}(\Phi(\pi))
=
\operatorname{DD}(\pi).
\)
\end{proof}

We now determine how $\Phi$ acts on patterns of length $3$. Let
$\sigma\in\mathfrak S_3$ and $\pi\in\mathfrak S_n$. Since the reverse,
complement, and inverse operations are invertible and carry pattern
occurrences to pattern occurrences,
\[
\pi\text{ contains }\sigma
\quad\Longleftrightarrow\quad
\Phi(\pi)\text{ contains }\Phi(\sigma).
\]

For $S\subseteq\mathfrak S_3$, define
\[
\Phi(S)=\{\Phi(\sigma):\sigma\in S\}.
\]
It follows that $\Phi$ restricts to a bijection
\[
\Phi:\Av_n(S)\longrightarrow\Av_n(\Phi(S)).
\]

On the six patterns of length $3$, $\Phi$ acts as follows:
\[
\begin{array}{c|cccccc}
\sigma       &123&321&132&213&231&312\\ \hline
\Phi(\sigma) &123&321&213&132&231&312
\end{array}
\]
Thus $\Phi$ fixes $123$, $321$, $231$, and $312$, and interchanges $132$ and $213$.

\begin{cor}
For every
\[
S\subseteq\{123,321,231,312\}
\]
and every $n\geq0$,
\[
f_{S\cup\{132\}}(n,x)
=
f_{S\cup\{213\}}(n,x).
\]
Hence, $S\cup\{132\}$ and $S\cup\{213\}$ are DD-Wilf equivalent.
\end{cor}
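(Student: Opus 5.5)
The plan is to apply the bijection $\Phi=C\circ R\circ I$ directly, combining the preceding proposition ($\operatorname{DD}(\Phi(\pi))=\operatorname{DD}(\pi)$) with the pattern table. Fix $S\subseteq\{123,321,231,312\}$ and put $T=S\cup\{132\}$. By the table, $\Phi$ fixes each element of $S$ and sends $132$ to $213$. Hence
\[
\Phi(T)=\{\Phi(\sigma):\sigma\in S\}\cup\{\Phi(132)\}=S\cup\{213\}.
\]

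Next I will verify that $\Phi$ restricts to a bijection $\Av_n(T)\to\Av_n(\Phi(T))$. The key fact is that $\pi$ contains $\sigma$ if and only if $\Phi(\pi)$ contains $\Phi(\sigma)$. It follows that $\pi$ avoids every $\sigma\in T$ exactly when $\Phi(\pi)$ avoids every $\Phi(\sigma)$. Since $\Phi$ is a bijection on $\mathfrak S_n$, being a composition of three involutions, this gives the restricted bijection, with inverse $\Phi^{-1}=I\circ R\circ C$ applied in the same way.

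Finally, I reindex the sum by $\rho=\Phi(\pi)$ and use the DD-invariance:
\[
f_{S\cup\{213\}}(n,x)=\sum_{\rho\in\Av_n(\Phi(T))}x^{\operatorname{DD}(\rho)}
=\sum_{\pi\in\Av_n(T)}x^{\operatorname{DD}(\Phi(\pi))}
=\sum_{\pi\in\Av_n(T)}x^{\operatorname{DD}(\pi)}=f_{S\cup\{132\}}(n,x).
\]
The case $n=0$ is trivial, since both sides equal $1$.

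The only point needing care is the pattern-transport claim, since the table itself is a finite computation. If I check it, I will note that reverse, complement, and inverse each send an occurrence of $\sigma$ in $\pi$ at positions $i_1<i_2<i_3$ to an occurrence of $R(\sigma)$, $C(\sigma)$, or $I(\sigma)$ respectively. For $I$, the occurrence sits at the sorted values $\pi(i_j)$. So it suffices to confirm $\Phi(132)=213$ by evaluating $\Phi(\sigma)(i)=4-\sigma^{-1}(4-i)$:
\[
\sigma=132,\quad \sigma^{-1}=132,\quad \Phi(\sigma)=(4-\sigma^{-1}(3),\,4-\sigma^{-1}(2),\,4-\sigma^{-1}(1))=(2,1,3).
\]
This gives $213$, as required.
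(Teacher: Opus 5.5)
Your proposal is correct and follows the paper's proof. You use $\Phi=C\circ R\circ I$ to send $S\cup\{132\}$ to $S\cup\{213\}$, restrict it via pattern transport to a bijection $\Av_n(S\cup\{132\})\to\Av_n(S\cup\{213\})$, and invoke $\operatorname{DD}(\Phi(\pi))=\operatorname{DD}(\pi)$; your checks of the transport under $R$, $C$, $I$ and of $\Phi(132)=213$ fill in details the paper states without verification.
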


\begin{proof}
Since $\Phi$ fixes every pattern in $S$ and interchanges $132$ and $213$,
\(
\Phi\bigl(S\cup\{132\}\bigr)=S\cup\{213\}.
\)
It therefore gives a bijection
\(
\Phi:
\Av_n(S\cup\{132\})
\longrightarrow
\Av_n(S\cup\{213\}).
\)
Moreover, the preceding proposition shows that
\(
\operatorname{DD}(\Phi(\pi))=\operatorname{DD}(\pi).
\)
Thus, the bijection preserves the exponent in the DD generating function, proving
\(
f_{S\cup\{132\}}(n,x)
=
f_{S\cup\{213\}}(n,x).
\)
\end{proof}

In particular, taking $S=\varnothing$ yields
\(
f_{132}(n,x)=f_{213}(n,x),
\)
so the patterns $132$ and $213$, whose avoidance classes were studied in Sections~\ref{sec:132} and~\ref{sec:213}, are DD-Wilf equivalent. 
This grouping differs from the fixed-point classification recalled in Section~\ref{sec:intro}.


In the next section, we apply the corollary to avoidance classes
defined by two forbidden patterns of length $3$.


\section{Two-pattern avoidance}
\label{sec:2pattern}

Since
\[
\Av_n(\sigma,\tau)
=
\Av_n(\sigma)\cap\Av_n(\tau),
\]
we refine the preceding one-pattern decompositions by imposing the second
avoidance condition on their blocks.

The Erd\H{o}s--Szekeres theorem \cite{ErdosSzekeres1935}
implies that every permutation of length at least \(5\) contains either an
increasing or a decreasing subsequence of length \(3\).
Consequently,
\(
\Av_n(123,321)=\varnothing, n\ge5.
\)
Hence,
\[
f_{\{123,321\}}(n,x)=0,
\qquad n\ge5,
\]
so this class is finite.

By Proposition~\ref{prop:312}, every double deficiency implies an
occurrence of the pattern $312$. Hence, if
\(
312\in S,
\)
then every permutation in $\Av_n(S)$ has no double deficiencies, and
therefore
\[
f_S(n,x)=|\Av_n(S)|.
\]
By Section~\ref{sec:DD-Wilf}, replacing
$132$ by $213$ (or vice versa) does not change the double-deficiency
generating function when the other forbidden pattern is one of
$123,321,231$, or $312$. Hence, after excluding the trivial class
$\Av(123,321)$ and the five classes containing $312$, it suffices to
consider the six classes
\[
\{321,231\},\quad
\{321,132\},\quad
\{132,231\},\quad
\{132,123\},\quad
\{132,213\},\quad
\{123,231\}.
\]


\subsection{The class $\Av(321,231)$}

Let \(S=\{321,231\}\). In the \(231\)-decomposition
\(\pi=L\,n\,R\), with \(\pi(i)=n\), the additional avoidance of
\(321\) forces \(R=[i,i+1,\ldots,n-1]\), while
\(L\in\Av_{i-1}(S)\). Hence,
\[
\pi=[L,n,i,i+1,\ldots,n-1].
\]

For \(i<n\), precisely \(j=i+1,\ldots,n-1\) are new double
deficiencies, since \(\pi(j)=j-1\) and \(\pi^{-1}(j)=j+1\); thus the
contribution is \(x^{n-i-1}f_S(i-1,x)\). For \(i=n\), it is
\(f_S(n-1,x)\).

Summing over all possible positions of \(n\), we obtain
\[
f_S(n,x)
=
f_S(n-1,x)
+
\sum_{i=1}^{n-1}
x^{n-i-1}f_S(i-1,x).
\]
The corresponding C-finite recurrence is, for $n\ge0$,
\[
f_S(n+2,x)
=
(x+1)f_S(n+1,x)
-
(x-1)f_S(n,x),
\]
where \(f_S(0,x)=1\), \(f_S(1,x)=1\), and \(f_S(2,x)=2\).

The mean is \((n-2)/4\) for \(n\ge2\), and
Figure~\ref{fig:321231} shows the distribution at \(n=170\).

\begin{figure}
\centering
\includegraphics[width=0.75\textwidth]{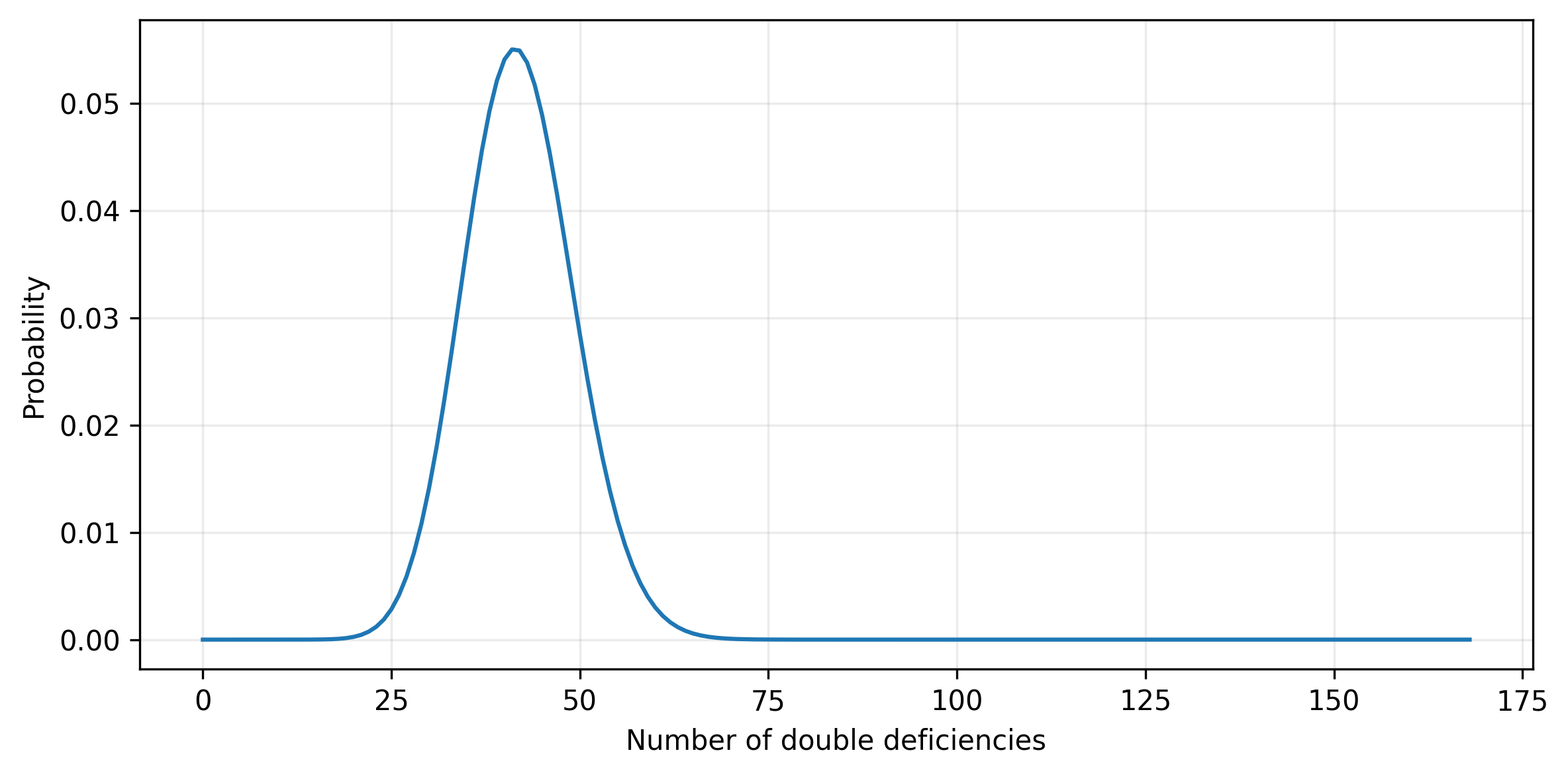}
\caption{Probability distribution of the number of double deficiencies
in \(\Av_{170}(321,231)\).}
\label{fig:321231}
\end{figure}


\subsection{The class $\Av(321,132)$}

Let \(S=\{321,132\}\) and use the \(132\)-decomposition
\(\pi=L\,n\,R\), where \(\pi(i)=n\). If \(i<n\), avoidance of \(321\)
forces both blocks to be increasing: a descent in \(R\), preceded by
\(n\), forms a \(321\)-pattern, while a descent in \(L\), followed by any
entry of the nonempty block \(R\), does the same. Thus,
\[
\pi=
[n-i+1,n-i+2,\ldots,n-1,n,1,2,\ldots,n-i].
\]

For this permutation, positions before or at \(i\) fail the first
inequality. In the right block, \(\pi(j)=j-i\); moreover
\(\pi^{-1}(j)=i+j\) exactly when \(j\le n-i\). Hence, the double
deficiencies are \(j=i+1,\ldots,n-i\), numbering
\(\max\{0,n-2i\}\). If \(i=n\), deleting the final maximum leaves an
arbitrary member of \(\Av_{n-1}(S)\) and creates no new deficiency.
Therefore,
\[
f_S(n,x)
=
f_S(n-1,x)
+
\sum_{i=1}^{n-1}
x^{\max\{0,n-2i\}},
\]
with
\(
f_S(0,x)=1.
\)
In particular,
\[
f_S(n,1)
=
\binom{n}{2}+1.
\]
The corresponding C-finite recurrence is, for $n\ge0$,
\[
\begin{aligned}
f_S(n+5,x)
={}&(x+2)f_S(n+4,x)
-2x f_S(n+3,x)
-2f_S(n+2,x)\\
&+(2x+1)f_S(n+1,x)
-x f_S(n,x),
\end{aligned}
\]
where $f_S(0,x)=1, f_S(1,x)=1, f_S(2,x)=2, f_S(3,x)=x+3, f_S(4,x)=x^2+x+5$
and $f_S(5,x)=x^3+x^2+2x+7$.


For the remaining four classes, we use the shift parameter \(c\ge0\) and
write
\[
f_S(n,c,x)
=
\sum_{\pi\in\Av_n(S)}
x^{\operatorname{DD}_c(\pi)}.
\]
Thus,
\[
f_S(n,x)=f_S(n,0,x).
\]


\subsection{The class $\Av(132,231)$}

Let \(S=\{132,231\}\) and write \(\pi=L\,n\,R\). The two avoidance
conditions require every entry of \(L\) to be, respectively, larger and
smaller than every entry of \(R\). Hence, \(L\) and \(R\) cannot both be
nonempty.
Thus, for \(n\ge2\),
\[
\pi=nR
\qquad\text{or}\qquad
\pi=L\,n,
\]
with the nonempty block in \(\Av_{n-1}(S)\). The first form contributes
\(f_S(n-1,c+1,x)\), with no extra contribution by
Lemma~\ref{lem:231} with \(i=1\). In the second, the same lemma with \(i=n\)
allows one additional shifted deficiency precisely when
\[
1\le j\le n-1
\qquad\text{and}\qquad
j+c=n.
\]
Thus this can occur only at \(j=n-c\). Define
\[
\triangle=
\begin{cases}
1,&n\in\{c+1,c+2,\ldots,c+n-1\},\\[1mm]
0,&\text{otherwise}.
\end{cases}
\]
When \(\triangle=1\), this candidate does satisfy both inequalities:
since \(j=n-c\in L\), \(c\ge1\), and \(\pi^{-1}(n)=n\),
\[
\pi(j)<j+c=n<\pi^{-1}(n)+c=n+c.
\]

The two forms therefore give, for \(n\ge2\) and \(n>c\),
\[
f_S(n,c,x)
=
f_S(n-1,c+1,x)
+
x^{\triangle}f_S(n-1,c,x).
\]

Together with \(f_S(0,c,x)=f_S(1,c,x)=1\), the boundary values are
\[
f_S(n,c,x)=2^{n-1},
\qquad 1\le n\le c.
\]
Here \(|\Av_n(S)|=2^{n-1}\), and no shifted value is admissible when
\(n\le c\). In particular, \(f_S(n,c,1)=2^{n-1}\) for \(n\ge1\).

At \(c=0\), the resulting C-finite recurrence is, for \(n\ge1\),
\[
f_S(n+3,x)
=
(x+1)f_S(n+2,x)
-
(x-2)f_S(n+1,x)
-
2f_S(n,x),
\]
where $f_S(0,x)=1, f_S(1,x)=1, f_S(2,x)=2$ and $f_S(3,x)=x+3$.

\begin{figure}
\centering
\includegraphics[width=0.75\textwidth]{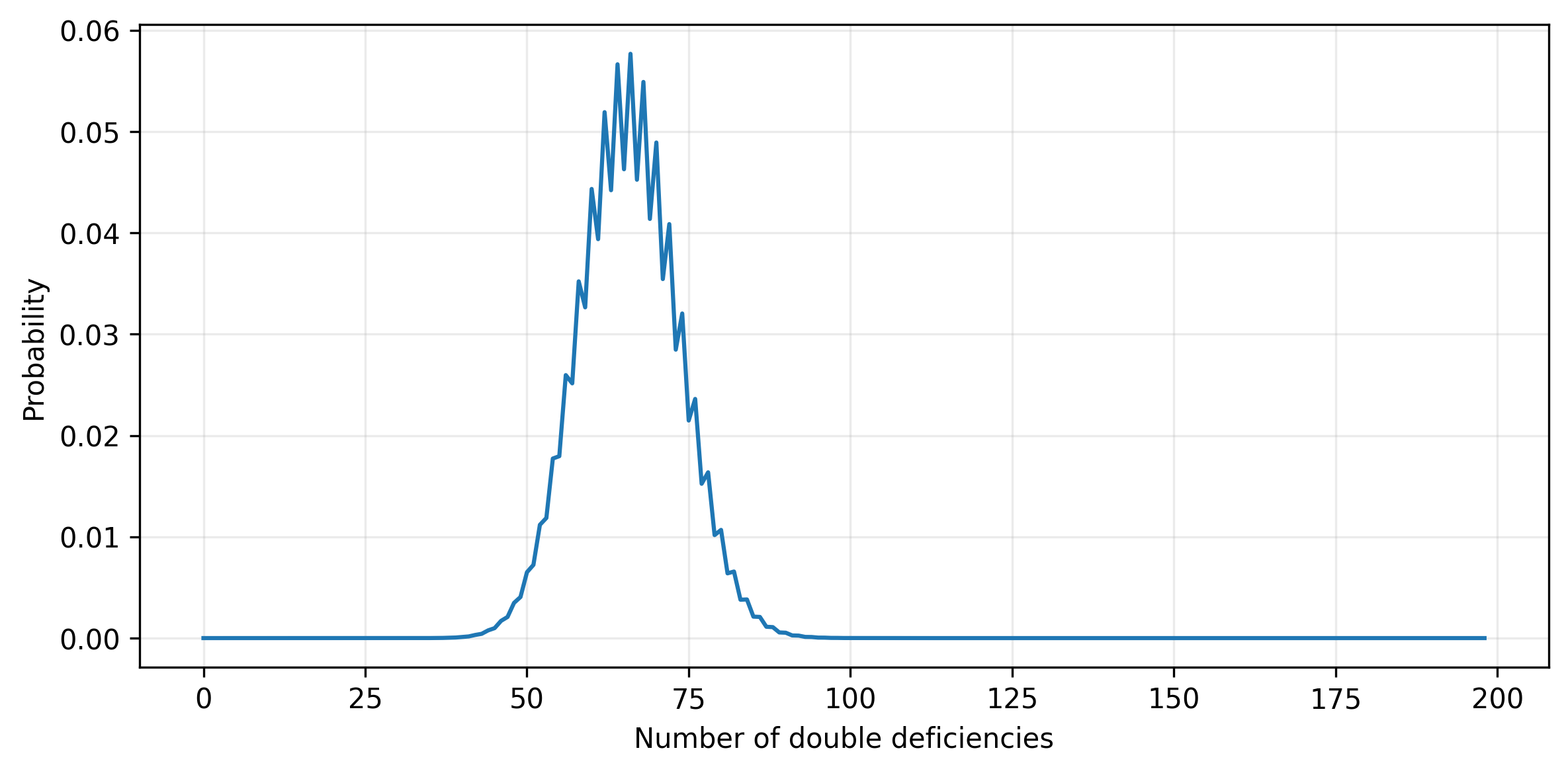}
\caption{Probability distribution of the number of double deficiencies
in \(\Av_{200}(132,231)\).}
\label{fig:132231}
\end{figure}

Figure~\ref{fig:132231} shows the distribution at \(n=200\).

\subsection{The class $\Av(132,123)$}

Let \(S=\{132,123\}\). In the \(132\)-decomposition
\(\pi=L\,n\,R\), with \(\pi(i)=n\), avoidance of \(123\) forces
\[
L=[n-1,n-2,\ldots,n-i+1].
\]
Indeed, an increasing pair in \(L\) would complete a \(123\)-pattern with
\(n\); meanwhile \(R\in\Av_{n-i}(S)\). Thus
\[
\pi=
[n-1,n-2,\ldots,n-i+1,n,R].
\]

The suffix contributes \(f_S(n-i,c+i,x)\). The decreasing block has no
internal contribution: if \(j+c\) is also in \(L\), then
\(\pi^{-1}(j+c)+c=\pi(j)\). As in the $\Av(132)$  case, the maximum adds one
shifted deficiency exactly when \(j=n-c\) lies in \(L\), i.e., when
\(c+1\le n\le c+i-1\). Define
\[
\triangle_i=
\begin{cases}
1,
& n\in\{c+1,c+2,\ldots,c+i-1\},\\[1mm]
0,
& \text{otherwise}.
\end{cases}
\]
Then, for \(n\ge1\) and \(0\le c<n\),
\[
f_S(n,c,x)
=
\sum_{i=1}^{n}
x^{\triangle_i}f_S(n-i,c+i,x).
\]
Together with \(f_S(0,c,x)=f_S(1,c,x)=1\), the boundary values are
\[
f_S(n,c,x)=|\operatorname{Av}_n(132,123)|=2^{n-1},
\qquad 1\le n\le c.
\]
Also \(f_S(n,c,1)=2^{n-1}\) for \(n\ge1\). At \(c=0\), the corresponding
C-finite recurrence is, for \(n\ge1\),
\[
f_S(n+3,x)
=
f_S(n+2,x)
+
4f_S(n+1,x)
-
4f_S(n,x),
\]
where $f_S(0,x)=1, f_S(1,x)=1, f_S(2,x)=2$ and $f_S(3,x)=x+3$.

\subsection{The class $\Av(132,213)$}

Let \(S=\{132,213\}\). In the \(132\)-decomposition
\(\pi=L\,n\,R\), avoidance of \(213\) forces \(L\) to be increasing
(a descent there completes a \(213\)-pattern with \(n\)). Hence,
\[
\pi=
[n-i+1,n-i+2,\ldots,n-1,n,R].
\]
Here \(R\in\Av_{n-i}(S)\), contributing \(f_S(n-i,c+i,x)\). There are
no cross-block contributions: a position in the prefix paired with a
shifted value in \(R\) fails the first inequality, while a position in
\(R\) paired with a shifted value in the prefix fails the second. Within
the fixed prefix,
\(\pi(j)=n-i+j\), and for an admissible shifted value in that prefix,
\(\pi^{-1}(j+c)=j+c-(n-i)\). Both strict inequalities therefore hold
exactly when \(n-i<c\). The position \(i\) itself cannot contribute, so
in this case the contributing positions are
\(j=1,\ldots,\min\{i-1,n-c\}\). Since
\(n-i<c\) is equivalent to \(n-c\le i-1\), 
\(\min\{i-1,n-c\}=n-c\). Therefore,
\[
\triangle_i=
\begin{cases}
n-c,&i>n-c,\\[1mm]
0,&i\le n-c.
\end{cases}
\]

Thus, for \(n\ge1\) and \(0\le c<n\),
\[
f_S(n,c,x)
=
\sum_{i=1}^{n}
x^{\triangle_i}f_S(n-i,c+i,x).
\]
Together with \(f_S(0,c,x)=f_S(1,c,x)=1\), the boundary values are
\[
f_S(n,c,x)=|\operatorname{Av}_n(132,213)|=2^{n-1},
\qquad 1\le n\le c.
\]
Also \(f_S(n,c,1)=2^{n-1}\) for \(n\ge1\). At \(c=0\), the corresponding
C-finite recurrence is, for \(n\ge1\),
\[
f_S(n+4,x)
=
(x+1)f_S(n+3,x)
-
(x-4)f_S(n+2,x)
-
4(x+1)f_S(n+1,x)
+
4x\,f_S(n,x),
\]
where $f_S(0,x)=1, f_S(1,x)=1, f_S(2,x)=2, f_S(3,x)=x+3$
and $f_S(4,x) = x^2+7$.


\subsection{The class $\Av(123,231)$}

Let \(S=\{123,231\}\) and use the \(231\)-decomposition with maximum in
position \(i\). For \(0\le c<n\), the case \(i=1\) contributes
\(f_S(n-1,c+1,x)\). If \(i>1\), avoidance of \(123\) forces both blocks
to be decreasing: an ascent in \(L\), together with \(n\), gives a
\(123\)-pattern, while \(1\in L\), so an ascent in \(R\), together with
that \(1\), also gives a \(123\)-pattern. Thus the permutation is
\[
\pi=
[i-1,i-2,\ldots,1,n,n-1,n-2,\ldots,i].
\]

The blocks have no internal shifted deficiencies: within either
consecutive decreasing block, \(\pi^{-1}(j+c)+c=\pi(j)\), so the two
strict inequalities cannot both hold. By
Lemma~\ref{lem:231}, the remaining positions form
\([1,i-1]\cap[i-c,n-c]\), so the number of them is
\[
\triangle_i
=
\min\{c,i-1,n-c,n-i+1\}.
\]

Thus, for \(n\ge1\) and \(0\le c<n\),
\[
f_S(n,c,x)
=
f_S(n-1,c+1,x)
+
\sum_{i=2}^{n}
x^{\min\{c,i-1,n-c,n-i+1\}}.
\]

Together with \(f_S(0,c,x)=f_S(1,c,x)=1\), the boundary values are
\[
f_S(n,c,x)=\binom{n}{2}+1,
\qquad n\le c.
\]
Here \(|\Av_n(S)|=\binom n2+1\). Also
\(f_S(n,c,1)=\binom n2+1\). At \(c=0\), the corresponding C-finite
recurrence is, for \(n\ge0\),
\[
\begin{aligned}
f_S(n+8,x)
={}&f_S(n+7,x)+2f_S(n+6,x)+(x-2)f_S(n+5,x)\\
&-(x+1)f_S(n+4,x)-(2x-1)f_S(n+3,x)\\
&+2x\,f_S(n+2,x)+x\,f_S(n+1,x)-x\,f_S(n,x),
\end{aligned}
\]
where $f_S(0,x)=1, f_S(1,x)=1, f_S(2,x)=2, f_S(3,x)=x+3, f_S(4,x)=2x+5, f_S(5,x)=5x+6,  
f_S(6,x)=x^2+6x+9, f_S(7,x)=2x^2+10x+10$ and $f_S(8,x)=5x^2+10x+14$.

\section{Conclusion}

The results show that the behavior of double deficiencies varies substantially
across length-$3$ avoidance classes, even when the underlying classes have the
same ordinary enumeration. 
For $\Av(321)$ the bivariate generating function is algebraic and the distribution admits an explicit binomial approximation; for $\Av(312)$ the
statistic vanishes identically; and for $\Av(132)$, $\Av(213)$ and $\Av(231)$
the shifted double-deficiency statistic yields convolution-type recurrences
that make the distributions computationally accessible. 
Together with the simplifications of Section~\ref{sec:2pattern}, the
DD-preserving symmetry
of Section~\ref{sec:DD-Wilf} reduces the two-pattern analysis to six cases, each of which
satisfies a C-finite recurrence.

In the $321$-avoiding case, Proposition~\ref{prop:binomial} gives exact
formulas for the mean and the variance and shows that
$\dTV(\Law(X_n),\Bin(n-2,\tfrac14))=O(n^{-1/2}(\log n)^{1/4})$.
The logarithmic factor arises from the way the total-variation sum is split, and
numerical evidence suggests that the true order is $n^{-1/2}$; determining the
exact order, and the corresponding constant, would sharpen the result.

Several questions remain. A comparable recurrence for the double-deficiency
generating functions of $123$-avoiding permutations has not been obtained
(Open Problem~\ref{op:123}). The asymptotic behavior of the distributions for the remaining
single-pattern classes is also open: 
the data suggest that the distribution for $\Av(231)$
is nearly symmetric, whereas the one for $\Av(132)$ is strongly skewed, with a
mean that grows sublinearly in $n$ rather than linearly as in $\Av(321)$ and
$\Av(231)$. Finally, it would be natural to investigate double deficiencies in
avoidance classes defined by patterns of length $4$ and, more generally,
longer patterns.

\appendix

\section{Proof of Proposition~\ref{prop:binomial}}\label{app:prop1}

\emph{Support.} Every red level step lies above the \(x\)-axis, so the path
contains at least one up-step before it and at least one down-step after it;
thus \(X_n\le n-2\).
Conversely, the path consisting of an up-step, \(n-2\) level steps, and a
down-step has all of its level steps above the \(x\)-axis, and coloring
exactly \(j\) of them red realizes every \(j\in\{0,1,\dots,n-2\}\).

\emph{Moments.} For \(n=2\), we have \(X_2=0\), so the stated formulas hold
immediately. We may therefore assume \(n\ge3\). Write
\(S=\sqrt{1-4z}\) and \(C=C(z)=(1-S)/(2z)\), so that
\(C=1+zC^{2}\), \(S=1-2zC\), and, by Remark~\ref{rem:catalan},
\(F_1(z,1)=C\) and \(F_0(z,1)=C^{2}\). Taking the first and
second derivatives of \eqref{eq:F0-func} and \eqref{eq:F1-func} with respect
to \(x\), evaluating at \(x=1\), and using \(z^{2}C^{2}=z(C-1)\), which gives the
coefficients \(1-2z-2z^{2}C^{2}=S\) and
\(1-z-z^{2}C^{2}=1/C\), yields
\[
\begin{gathered}
\frac{\partial F_0}{\partial x}\bigg|_{x=1}
=\frac{zC^{2}}{S},
\qquad
\frac{\partial F_1}{\partial x}\bigg|_{x=1}
=\frac{z^{3}C^{4}}{S},
\qquad
\frac{\partial^{2}F_0}{\partial x^{2}}\bigg|_{x=1}
=
\frac{2z^{2}C^{2}}{S^{2}}
+\frac{2z^{4}C^{4}}{S^{3}},
\\[4pt]
\frac{\partial^{2}F_1}{\partial x^{2}}\bigg|_{x=1}
=
\frac{2z^{4}C^{4}}{S^{2}}
+\frac{2z^{6}C^{6}}{S^{3}}
+\frac{2z^{6}C^{7}}{S^{2}}.
\end{gathered}
\]
Since
\[
[z^m]C^{k}(1-4z)^{-1/2}=\binom{2m+k}{m}
\]
\cite[eq.~(5.72)]{GKP}, the formula for
\(\partial F_1/\partial x\big|_{x=1}\) gives
\[
\mathbb E[X_n]
=
\frac{1}{C_n}[z^n]\frac{z^{3}C^{4}}{S}
=
\frac{1}{C_n}\binom{2n-2}{n-3}
=
\frac{(n-1)(n-2)}{2(2n-1)}.
\]
Substituting \(C=(1-S)/(2z)\) in the formula for
\(\partial^{2}F_1/\partial x^{2}\big|_{x=1}\) and reducing every even power
of \(S\) using \(S^{2}=1-4z\) gives
\[
\frac{\partial^{2}F_1}{\partial x^{2}}\bigg|_{x=1}
=
\frac{9-26z+26z^{2}-6z^{3}-z^{-1}}{(1-4z)^{3/2}}
+\Bigl(\frac1z-3+2z\Bigr).
\]
The parenthesized part does not contribute to \([z^n]\) for \(n\ge2\).
Since
\[
[z^m](1-4z)^{-3/2}
=
(2m+1)\binom{2m}{m},
\]
expressing the resulting central binomial coefficients as multiples of
\(\binom{2n}{n}\) yields
\[
\mathbb E\bigl[(X_n)_2\bigr]
=
\frac{(n-2)(n-3)(n^{2}-3n+6)}
{4(2n-1)(2n-3)},
\qquad
(u)_2=u(u-1).
\]
The stated variance follows from
\[
\Var(X_n)
=
\mathbb E[(X_n)_2]
+\mathbb E[X_n]
-\mathbb E[X_n]^{2}.
\]

\emph{Singular expansion.} Write
\(
p_n(x)=[z^n]F_1(z,x),
\)
so that \(p_n(x)/p_n(1)\) is the probability generating function of
\(X_n\) and \(p_n(1)=C_n\). The discriminant in the expression for
\(F_1\) factors as
\[
1-2z-2xz-3z^{2}+2xz^{2}+x^{2}z^{2}
=
\bigl(1-(x+3)z\bigr)\bigl(1-(x-1)z\bigr)
=:D(z,x),
\]
and rationalizing gives
\[
F_1(z,x)
=
\frac{\bigl(1+(x-1)z\bigr)-\sqrt{D(z,x)}}
{2z\bigl(x-(x-1)z\bigr)}.
\]
Hence, the only possible singularities of \(z\mapsto F_1(z,x)\) are
the branch points \((x+3)^{-1}\) and \((x-1)^{-1}\), together with the
possible pole \(x/(x-1)\); the apparent singularity at \(z=0\) is
removable. Fix \(\varepsilon>0\) sufficiently small and put
\[
\mathcal N
=
\{x\in\mathbb C:|x-1|\le\varepsilon\}.
\]
For \(x\in\mathcal N\), the last two possible singularities have modulus
at least \(1\). Therefore,
\[
\rho(x)=\frac{1}{x+3}
\]
is the unique dominant singularity, and \(F_1(\,\cdot\,,x)\) is
\(\Delta\)-analytic there, uniformly for \(x\in\mathcal N\).

Put \(\eta=1-(x+3)z\). The rationalized form gives
\[
F_1(z,x)
=
\mathcal R(z,x)
-
\frac{\sqrt{1-(x-1)z}}
{2z\bigl(x-(x-1)z\bigr)}\eta^{1/2},
\]
where
\[
\mathcal R(z,x)
=
\frac{1+(x-1)z}
{2z\bigl(x-(x-1)z\bigr)}.
\]
The factor multiplying \(\eta^{1/2}\) is analytic at \(z=\rho(x)\),
uniformly for \(x\in\mathcal N\), and its value there is
\[
\frac{\sqrt{1-(x-1)\rho(x)}}
{2\rho(x)\bigl(x-(x-1)\rho(x)\bigr)}
=
\frac{(x+3)^{3/2}}{(x+1)^{2}}.
\]
Its Taylor expansion at \(z=\rho(x)\) therefore gives
\[
F_1(z,x)
=
\mathcal R(z,x)
-
\frac{(x+3)^{3/2}}{(x+1)^{2}}\eta^{1/2}
+
O\bigl(\eta^{3/2}\bigr),
\]
uniformly for \(x\in\mathcal N\).

Although \(\mathcal R\) has a simple pole at \(z=0\), its principal part is
\(1/(2xz)\), which does not contribute to \([z^n]\) for \(n\ge0\). After
subtracting this principal part, \(\mathcal R\) is
analytic in a disk whose radius is uniformly larger than
\(|\rho(x)|\). Hence, its coefficients of nonnegative powers are
exponentially smaller than \((x+3)^n\). Applying uniform singularity
analysis
\cite[Chapter~VI and Section~IX.7]{FlajoletSedgewick} to \(F_1-\mathcal R\)
and adding back the contribution of \(\mathcal R\) therefore gives
\[
p_n(x)
=
\frac{(x+3)^{3/2}}{2\sqrt{\pi}\,(x+1)^{2}}
(x+3)^{n}n^{-3/2}
\bigl(1+O(n^{-1})\bigr),
\]
uniformly for \(x\in\mathcal N\). At \(x=1\), this reduces to
\(
C_n
\sim
\frac{4^n}{\sqrt{\pi}\,n^{3/2}}.
\)
Dividing by \(p_n(1)\), we obtain
\[
\frac{p_n(x)}{p_n(1)}
=
A(x)B(x)^{n}\bigl(1+O(n^{-1})\bigr),
\]
where
\[
A(x)=\frac{(x+3)^{3/2}}{2(x+1)^{2}},
\qquad
B(x)=\frac{x+3}{4}.
\]

\emph{Characteristic functions.} Let
\[
\phi_n(t)=\frac{p_n(e^{it})}{p_n(1)}
\qquad\text{and}\qquad
\psi_n(t)=B(e^{it})^{\,n-2}.
\]
The latter is the characteristic function of
\(\Bin(n-2,\frac14)\), whose probability generating function is
\(B(x)^{n-2}\). Put \(h=AB^{2}\). Then \(h\) is analytic near \(x=1\)
and
\[
h(1)=A(1)B(1)^{2}=1.
\]
Therefore,
\(
\phi_n(t)
=
\psi_n(t)\bigl(h(e^{it})+O(n^{-1})\bigr)
\)
uniformly for \(|t|\le\delta\), where \(\delta>0\) is sufficiently small
that \(e^{it}\in\mathcal N\) whenever \(|t|\le\delta\). Since
\(
|h(e^{it})-1|=O(|t|)
\)
and
\[
\bigl|B(e^{it})\bigr|^{2}
=
\frac{10+6\cos t}{16}
=
1-\frac34\sin^{2}\frac t2
\le
e^{-3t^{2}/(4\pi^{2})},
\]
we obtain
\[
|\phi_n(t)-\psi_n(t)|
\le
K\bigl(|t|+n^{-1}\bigr)e^{-cnt^{2}},
\qquad
|t|\le\delta,
\]
where \(K,c>0\) are independent of \(n\) and \(t\).

For \(\delta\le |t|\le\pi\) and \(x=e^{it}\), the three possible
singularities listed above have moduli
\[
|x+3|^{-1}\ge\frac14+\gamma,
\qquad
|x-1|^{-1}\ge\frac12,
\qquad
\left|\frac{x}{x-1}\right|\ge\frac12
\]
for some \(\gamma=\gamma(\delta)>0\). Hence,
\(F_1(z,e^{it})\) is analytic and uniformly bounded on
\(|z|\le R\) for some \(R>\frac14\). Cauchy's estimate, together with
\[
p_n(1)=C_n
\sim
\frac{4^n}{\sqrt{\pi}\,n^{3/2}},
\]
shows that \(\phi_n(t)\) is uniformly exponentially small on this
range. The same holds for \(\psi_n(t)\), since
\(|B(e^{it})|<1\) uniformly there. Parseval's identity applied to
\(\phi_n-\psi_n\) now gives
\[
\sum_{j\in\mathbb Z}
\left(
\mathbb P(X_n=j)
-
\mathbb P\bigl(\Bin(n-2,\tfrac14)=j\bigr)
\right)^{2}
=
O\bigl(n^{-3/2}\bigr).
\]

\emph{Conclusion.} For real \(s\) sufficiently close to \(0\), the same
uniform estimate gives
\[
\mathbb E[e^{sX_n}]
\le
K\exp\bigl(n\Lambda(s)\bigr),
\qquad
\Lambda(s)=\log B(e^s).
\]
Moreover,
\[
\Lambda'(0)=\frac14,
\qquad
\Lambda''(0)=\frac3{16},
\]
and hence
\[
\Lambda(s)
=
\frac{s}{4}+\frac{3s^{2}}{32}+O(s^{3}).
\]
The same bound holds for \(\Bin(n-2,\frac14)\), whose moment generating
function is exactly
\(
e^{(n-2)\Lambda(s)}.
\)
The Cram\'er--Chernoff method
\cite[Section~2.2]{BLM}, applied with
\(
s=2\sqrt{\frac{\log n}{n}}
\)
for the upper tail and its negative for the lower tail, therefore shows
that both distributions put mass \(O(n^{-1})\) outside
\[
I_n
=
\left\{
j:
\left|j-\frac n4\right|
\le
\sqrt{n\log n}
\right\}.
\]
Since \(\#I_n=O(\sqrt{n\log n})\), splitting the total-variation sum at
\(I_n\) and applying the Cauchy--Schwarz inequality inside \(I_n\) gives
\[
\begin{aligned}
2\,\dTV\Bigl(
\Law(X_n),
\Bin\bigl(n-2,\tfrac14\bigr)
\Bigr)
&\le
(\#I_n)^{1/2}
\left(
\sum_{j\in\mathbb Z}
\left(
\mathbb P(X_n=j)
-
\mathbb P\bigl(\Bin(n-2,\tfrac14)=j\bigr)
\right)^2
\right)^{1/2}
\\
&\qquad{}+O(n^{-1})
\\
&=
(\#I_n)^{1/2}O(n^{-3/4})+O(n^{-1})
\\
&=
O\bigl(n^{-1/2}(\log n)^{1/4}\bigr),
\end{aligned}
\]
as required. \hfill$\square$



\begin{thebibliography}{99}

\bibitem{BJS}
Sara C. Billey, William Jockusch, and Richard P. Stanley,
{\it Some combinatorial properties of Schubert polynomials},
Journal of Algebraic Combinatorics, Vol. 2 (1993), No. 4, pp. 345--374.

\bibitem{BLM}
St\'ephane Boucheron, G\'abor Lugosi, and Pascal Massart,
{\it Concentration Inequalities: A Nonasymptotic Theory of Independence},
Oxford University Press, Oxford, 2013.



\bibitem{CarlitzScoville}
L. Carlitz and Richard Scoville,
{\it Generalized Eulerian numbers: combinatorial applications},
Journal f\"ur die reine und angewandte Mathematik, Vol. 265 (1974),
pp. 110--137.

\bibitem{DokosEtAl2012}
Theodore Dokos, Tim Dwyer, Bryan P. Johnson, Bruce E. Sagan, and Kimberly Selsor,
{\it Permutation patterns and statistics},
Discrete Mathematics, Vol. 312 (2012), No. 18, pp. 2760--2775.

\bibitem{Elizalde2004}
Sergi Elizalde,
{\it Multiple pattern avoidance with respect to fixed points and excedances},
Electronic Journal of Combinatorics, Vol. 11 (2004), No. 1,
Research Paper 51, 40 pp.

\bibitem{ElizaldeCF}
Sergi Elizalde,
{\it Continued fractions for permutation statistics},
Discrete Mathematics \& Theoretical Computer Science,
Vol. 19 (2018), No. 2, Article \#11.

\bibitem{ElizaldeNoy}
Sergi Elizalde and Marc Noy,
{\it Consecutive patterns in permutations},
Advances in Applied Mathematics, Vol. 30 (2003), No. 1--2,
pp. 110--125.

\bibitem{ElizaldePak2004}
Sergi Elizalde and Igor Pak,
{\it Bijections for refined restricted permutations},
Journal of Combinatorial Theory, Series A, Vol. 105 (2004),
No. 2, pp. 207--219.

\bibitem{ErdosSzekeres1935}
Paul Erd\H{o}s and George Szekeres,
{\it A combinatorial problem in geometry},
Compositio Mathematica, Vol. 2 (1935), pp. 463--470.


\bibitem{FlajoletSedgewick}
Philippe Flajolet and Robert Sedgewick,
\emph{Analytic Combinatorics},
Cambridge University Press, Cambridge, 2009.



\bibitem{FoataZeilberger1990}
Dominique Foata and Doron Zeilberger,
{\it Denert's permutation statistic is indeed Euler--Mahonian},
Studies in Applied Mathematics, Vol. 83 (1990), No. 1, pp. 31--59.




\bibitem{Fu}
Amy M. Fu,
{\it A context-free grammar for peaks and double descents of permutations},
Advances in Applied Mathematics, Vol. 100 (2018), pp. 179--196.




\bibitem{GKP}
Ronald L. Graham, Donald E. Knuth, and Oren Patashnik,
{\it Concrete Mathematics}, 2nd ed., Addison--Wesley, Reading, MA, 1994.



\bibitem{MAZ}
Tipaluck Krityakierne, Thotsaporn ``Aek'' Thanatipanonda, and Doron Zeilberger,
{\it A quick proof that 321-avoiding permutations without double deficiencies
are counted by the Motzkin numbers}, arXiv:2607.05431 (2026).


\bibitem{RSZ}
Aaron Robertson, Dan Saracino, and Doron Zeilberger,
{\it Refined restricted permutations},
Annals of Combinatorics, Vol. 6 (2002), No. 3, pp. 427--444.


\bibitem{RS}
Martin Rubey and Christian Stump,
{\it Double deficiencies of Dyck paths via the Billey--Jockusch--Stanley bijection},
Journal of Integer Sequences, Vol. 20 (2017), Article 17.9.6.

\bibitem{SimionSchmidt1985}
Rodica Simion and Frank W. Schmidt,
{\it Restricted permutations},
European Journal of Combinatorics, Vol. 6 (1985), No. 4, pp. 383--406.


\bibitem{Vatter2002}
Vincent R. Vatter,
{\it Permutations avoiding two patterns of length three},
Electronic Journal of Combinatorics, Vol. 9 (2003), No. 2, Article \#R6.

\end{thebibliography}
\end{document}